\documentclass[12pt,oneside]{amsart}
\usepackage{faktor}
\usepackage[hypertexnames=false]{hyperref}
\hypersetup{
	colorlinks=true,
	linktoc=all,
	linkcolor=black,
}
\usepackage{float}
\usepackage{adjustbox}
\usepackage{graphicx}
\usepackage{latexsym,amssymb,amscd}
\usepackage{amsmath}
\usepackage[all]{xy}
\usepackage{amsthm}
\usepackage{mathrsfs}
\usepackage{times,enumerate}
\usepackage[usenames,dvipsnames]{color}
\usepackage{blkarray}
\usepackage{amsmath}
\usepackage{faktor}
\usepackage{tikz}
\usetikzlibrary{arrows,positioning}

\usetikzlibrary{arrows,decorations.pathmorphing,backgrounds,positioning,fit,matrix}
\usepackage{tikz-cd}
\usepackage{float}
\usepackage{hhline}
\usepackage[english]{babel}

\makeatletter
\DeclareRobustCommand{\loplus}{\mathbin{\mathpalette\dog@lsemi{+}}}
\DeclareRobustCommand{\lotimes}{\mathbin{\mathpalette\dog@lsemi{\times}}}
\DeclareRobustCommand{\roplus}{\mathbin{\mathpalette\dog@rsemi{+}}}
\DeclareRobustCommand{\rotimes}{\mathbin{\mathpalette\dog@rsemi{\times}}}

\newcommand{\dog@rsemi}[2]{\dog@semi{#1}{#2}{-90,90}}
\newcommand{\dog@lsemi}[2]{\dog@semi{#1}{#2}{270,90}}
\newcommand{\dog@semi}[3]{%
	\begingroup
	\sbox\z@{$\m@th#1#2$}%
	\setlength{\unitlength}{\dimexpr\ht\z@+\dp\z@\relax}%
	\makebox[\wd\z@]{\raisebox{-\dp\z@}{%
			\begin{picture}(1,1)
				\linethickness{\variable@rule{#1}}
				\roundcap
				\put(0.5,0.5){\makebox(0,0){\raisebox{\dp\z@}{$\m@th#1#2$}}}
				\put(0.5,0.5){\arc[#3]{0.5}}
			\end{picture}%
	}}%
	\endgroup
}
\newcommand{\variable@rule}[1]{%
	\fontdimen8  
	\ifx#1\displaystyle\textfont3\else
	\ifx#1\textstyle\textfont3\else
	\ifx#1\scriptstyle\scriptfont3\else
	\scriptscriptfont3\relax
	\fi\fi\fi
}
\makeatother

\usepackage{pict2e}

\usepackage{mathtools}
\usepackage{stackengine}
\setstackEOL{\\}
\usepackage{tikz-cd}
\usetikzlibrary{arrows,decorations.pathmorphing,backgrounds,positioning,fit,petri,calc,shapes.misc,decorations.markings}
\tikzset{degil/.style={
		decoration={markings,
			mark= at position 0.5 with {
				\node[transform shape] (tempnode) {$\backslash$};
			}
		},
		postaction={decorate}
	}
}

\usepackage{blkarray}

\usepackage{comment}
\usepackage{faktor}
\usepackage[hypertexnames=false]{hyperref}
\hypersetup{
	colorlinks=true,
	linktoc=all,
	linkcolor=black,
}

      \usepackage{amssymb}
      
      \usepackage{lipsum}

      \theoremstyle{plain}
      \newtheorem{theorem}{Theorem}[section]
      \newtheorem{lemma}[theorem]{Lemma}
      \newtheorem{corollary}[theorem]{Corollary}
      \newtheorem{proposition}[theorem]{Propositition}

      \theoremstyle{definition}
      \newtheorem{definition}[theorem]{Definition}
      
      \theoremstyle{remark}
      \newtheorem{remark}[theorem]{Remark}

      \makeatletter
      \def\@setcopyright{}
      \def\serieslogo@{}
      \makeatother

      \let\OLDthebibliography\thebibliography
      \renewcommand\thebibliography[1]{
      	\OLDthebibliography{#1}
      	\setlength{\parskip}{8pt}
      	\setlength{\itemsep}{0.9pt plus 0.3ex}
      }
   
\makeatletter
\g@addto@macro{\endabstract}{\@setabstract}
\newcommand{\authorfootnotes}{\renewcommand\thefootnote{\@fnsymbol\c@footnote}}%
\makeatother

\begin{document}
	
	 \author{Spyridon Afentoulidis-Almpanis}

	  \address{Dep. of Mathematics,
	  	Bar-Ilan University,
	  	Ramat-Gan, 5290002 Israel}
	
	\email{spyridon.almpanis@biu.ac.il}

	
	\title[basic simple $\mathbb{Z}_2\times \mathbb{Z}_2$-graded Lie algebras]{Structure and Representation Theory of basic simple $\mathbb{Z}_2\times \mathbb{Z}_2$-graded color Lie algebras}
	
		\maketitle
	
	\begin{abstract} We adapt methods from the theory of complex semisimple Lie algebras to develop a root theory for a class of simple $\mathbb{Z}_2 \times \mathbb{Z}_2$-graded (color) Lie algebras, which we call basic. As an application, assuming that the Cartan subalgebra is self-centralizing, we classify all finite-dimensional representations of these algebras by proving a highest weight theorem and a complete reducibility theorem.
	\end{abstract}

\tableofcontents

	\vspace{5mm}
	\subjclass{\textit{Primary} 17B75; \textit{Secondary} 17B81, 17B22
	}
	
	\vspace{5mm}
	\keywords{\textit{Keywords:} $\mathbb{Z}_2\times\mathbb{Z}_2$-graded Lie algebras, finite-dimensional representations, root systems, highest weight theorem, complete reducibility}
	
	\vspace{5mm}

	
	
	

	

	\section{Introduction}
	
	The notion of color Lie algebras was originally introduced and studied by Ree \cite{ree}, Rittenberg and Wyler \cite{ritten,ritten2}, and Scheunert \cite{scheunert} as a generalization of Lie algebras and Lie superalgebras. A color Lie algebra is a (nonassociative) graded algebra where the grading is taken with respect to some abelian group $G$ while the "symmetry" of the corresponding operation is determined by a bicharacter
	\begin{equation*}
		\epsilon: G\times G\rightarrow \mathbb{C}^\times,
	\end{equation*}
called the commutation factor on $G$.
	
	A special type of color Lie algebras are the $\mathbb{Z}_2\times\mathbb{Z}_2$-graded (color) Lie algebras. In this case, the abelian group $G$ is  $\mathbb{Z}_2\times\mathbb{Z}_2$ while the commutation factor $\epsilon$ is given by some determinant formula (for more details, see Subsection \ref{determinant}). In an analogous way, the notion of a $\mathbb{Z}_2\times\mathbb{Z}_2$-graded (color) Lie superalgebra can be defined (e.g., \cite{stoil24}).
	%
%
	These $\mathbb{Z}_2\times \mathbb{Z}_2$-graded Lie (super)algebras appear in various applications in mathematical physics. To mention some of them, they encode dynamical symmetries of L\' evy-Leblond equation, i.e., the non-relativistic limit of the Dirac equation \cite{aizawa,aisawa2,ryan25}, they have been utilized in graded quantum mechanics \cite{bruce} and the construction of classical linear and nonlinear sigma models \cite{bruce2020}, while $\mathbb{Z}_2\times \mathbb{Z}_2$-graded Lie (super)algebras appear in parastatistics \cite{tolstoy,stoil25}.
	Recently, a connection between $\mathbb{Z}_2\times\mathbb{Z}_2$-graded Lie algebras and knot invariants of Vassiliev and Kontsevich was drawn in
	\cite{aizawa2024universalweightsystemsminimal} while in \cite{leo} it is shown that $\mathbb{Z}_2\times \mathbb{Z}_2$-graded Lie algebras appear in the study of odd Khovanov homology. In
	\cite{meyer1,meyer2} a theory of algebraic Dirac operators for $\mathbb{Z}_2\times \mathbb{Z}_2$-graded Lie algebras (and more generally for color Lie algebras) was developed. We would like to draw the reader's attention to the fact that the $\mathbb{Z}_2\times\mathbb{Z}_2$-graded Lie algebras are not Lie algebras equipped with a grading in the sense of \cite{elduque}, but rather color Lie algebras.
	
In what follows, we study simple $\mathbb{Z}_2 \times \mathbb{Z}_2$-graded complex Lie algebras $\mathfrak{g}$ satisfying the condition that their Killing form is nondegenerate and that the $(0,0)$-degree component $\mathfrak{g}^{(0,0)}$ is reductive. We call such a $\mathbb{Z}_2 \times \mathbb{Z}_2$-graded Lie algebra basic.

Using methods inspired by the theory of complex semisimple Lie algebras, we develop in Section~\ref{SEC2} a root theory for these $\mathbb{Z}_2 \times \mathbb{Z}_2$-graded Lie algebras. More precisely, to every basic simple $\mathbb{Z}_2 \times \mathbb{Z}_2$-graded Lie algebra $\mathfrak{g}$, we associate an abstract root system \cite[Section II.5]{knappgreen}. As a consequence, the full machinery of abstract root systems becomes available (Weyl groups, positive root systems, simple roots, etc.).

We then use these results to classify the finite-dimensional representations of $\mathfrak{g}$. In Subsection~\ref{hwt}, we establish a highest weight theorem for finite-dimensional irreducible representations of $\mathfrak{g}$. In Subsection~\ref{weylscd}, we prove that every finite-dimensional representation of $\mathfrak{g}$ decomposes as a direct sum of irreducible subrepresentations.

Finally, we illustrate our results with two examples and conclude by raising an open question.
	
		\thanks{\textit{Acknowledgement:} The author was supported by
		the Israel Science Foundation (grant No. 1040/22). The author acknowledges support of the Institut Henri Poincar\'e (UAR 839 CNRS-Sorbonne Universit\' e), and LabEx CARMIN (ANR-10-LABX-59-01).
		
		The author would like to thank Professors C. Daskaloyannis, N. I. Stoilova, and J. Van der Jeugt for their valuable comments and suggestions.}

	\section{Basic $\mathbb{Z}_2\times\mathbb{Z}_2$-graded Lie algebras}\label{SEC2}
	In this section, we recall the notion of a $\mathbb{Z}_2\times \mathbb{Z}_2$-graded Lie algebra and we develop a root theory for a certain family of such algebras that we call basic.
	
	\subsection{$\mathbb{Z}_2\times \mathbb{Z}_2$-graded Lie algebras}\label{determinant}
	Consider the direct product $\mathbb{Z}_2\times \mathbb{Z}_2$ equipped with the standard additive operation
	\begin{equation*}
		(a_1,a_2)+(b_1,b_2):=(a_1+b_1,a_2+b_2)
	\end{equation*}
and set
\begin{equation}\label{multiplicative}
	(a_1,a_2)(b_1,b_2):=\det \begin{pmatrix} a_1&a_2\\
		b_1&b_2
		\end{pmatrix}=a_1b_2-a_2b_1\in\mathbb{Z}_2
\end{equation}
for every $(a_1,a_2),(b_1,b_2)\in\mathbb{Z}_2\times \mathbb{Z}_2$.
Observe that \eqref{multiplicative} is symmetric in the sense that 
\begin{equation*}
	(a_1,a_2)(b_1,b_2)=(b_1,b_2)(a_1,a_2).
\end{equation*}

	\begin{definition}\label{firstdef}	A \textit{$\mathbb{Z}_2\times \mathbb{Z}_2$-graded complex Lie algebra} is a $\mathbb{Z}_2\times \mathbb{Z}_2$-graded complex vector space $\mathfrak{g}=\mathfrak{g}^{(0,0)}\oplus \mathfrak{g}^{(0,1)}\oplus \mathfrak{g}^{(1,0)}\oplus \mathfrak{g}^{(1,1)}$ equipped with a bilinear map
			\begin{equation*}
				[\cdot,\cdot]:\mathfrak{g}\times\mathfrak{g}\rightarrow\mathfrak{g}
			\end{equation*} 
		satisfying
		\begin{itemize}
			\item[(i)] $[\mathfrak{g}^a,\mathfrak{g}^b]\subseteq \mathfrak{g}^{a+b}$ for every $a,b\in\mathbb{Z}_2\times \mathbb{Z}_2$,
			\item[(ii)] the \textit{$\mathbb{Z}_2\times \mathbb{Z}_2$-graded antisymmetry property}
			\begin{equation*}[x,y]=-(-1)^{ab}[y,x]
			\end{equation*}
		 for every $x\in\mathfrak{g}^a$ and $y\in\mathfrak{g}^b$,
			\item[(iii)] the \textit{$\mathbb{Z}_2\times \mathbb{Z}_2$-graded Jacobi identity}
			 \begin{equation*}
				[x,[y,z]]=[[x,y],z]+(-1)^{ab}[y,[x,z]]
					\end{equation*}
				for every $x\in\mathfrak{g}^a$, $y\in\mathfrak{g}^b$ and $z\in\mathfrak{g}$.
		\end{itemize}
	\end{definition}

\begin{remark}
	In this text, all the algebras that we consider will be complex.
\end{remark}

Notice that every Lie algebra is, in a trivial way, a $\mathbb{Z}_2\times \mathbb{Z}_2$-graded Lie algebra with $\mathfrak{g}^{(0,1)}=\mathfrak{g}^{(1,0)}=\mathfrak{g}^{(1,1)}=\{0\}$.
	On the other hand, for every $\mathbb{Z}_2\times \mathbb{Z}_2$-graded Lie algebra, the defining conditions (ii) and (iii) of Definition \ref{firstdef} for elements of the subspace $\mathfrak{g}^{(0,0)}$ become the standard antisymmetry and Jacobi identity of Lie algebras so that $\mathfrak{g}^{(0,0)}$ possesses a canonical structure of a Lie algebra.
	Moreover, for every $a\in\mathbb{Z}_2\times \mathbb{Z}_2$, the subspace $\mathfrak{g}^a$ of $\mathfrak{g}$ is a $\mathfrak{g}^{(0,0)}$-representation. 

We have a method to construct $\mathbb{Z}_2\times \mathbb{Z}_2$-graded Lie algebras starting from $\mathbb{Z}_2\times \mathbb{Z}_2$-graded algebras. More precisely, if $A$ is an associative $\mathbb{Z}_2\times \mathbb{Z}_2$-graded algebra, then $A$ equipped with the bracket 
	\begin{equation}\label{assocbracket}
		[x,y]:=xy-(-1)^{ab}yx, \quad x\in\mathfrak{g}^a, y\in\mathfrak{g}^b
	\end{equation}
is a $\mathbb{Z}_2\times \mathbb{Z}_2$-graded Lie algebra. 
An important example of such a $\mathbb{Z}_2\times \mathbb{Z}_2$-graded algebra arises as the algebra of $\mathbb{Z}_2\times \mathbb{Z}_2$-graded endomorphisms of $\mathbb{Z}_2\times \mathbb{Z}_2$-graded vector spaces. More precisely, if $V$ is a
$\mathbb{Z}_2\times \mathbb{Z}_2$-graded finite-dimensional vector space, for every $a\in \mathbb{Z}_2\times\mathbb{Z}_2$, set
	\begin{equation*}
		\mathrm{End}^a(V):=\{ T\in\mathrm{End}(V)\mid T(V^b)\subset V^{a+b}, \forall b\in\mathbb{Z}_2\times\mathbb{Z}_2\}.
	\end{equation*}
Then, the space 
\begin{equation*}\mathrm{End}(V):=\bigoplus\limits_{a\in \mathbb{Z}_2\times\mathbb{Z}_2}	\mathrm{End}^a(V)
\end{equation*}
is a $\mathbb{Z}_2\times\mathbb{Z}_2$-graded algebra. Equipped with the bracket \eqref{assocbracket}, $\mathrm{End}(V)$ is a $\mathbb{Z}_2\times\mathbb{Z}_2$-graded Lie algebra, denoted by $\mathfrak{gl}(V)$, that we call \textit{the general linear $\mathbb{Z}_2\times\mathbb{Z}_2$-graded Lie algebra of $V$}. 
 In \cite{ritten2,stoil}, the authors provide families of $\mathbb{Z}_2\times\mathbb{Z}_2$-graded Lie algebras which, in a certain sense, are analogous to the classical Lie algebras. 
 
 \subsection{Basic simple $\mathbb{Z}_2\times \mathbb{Z}_2$-graded Lie algebras}

Recall that a $\mathbb{Z}_2\times\mathbb{Z}_2$-graded Lie algebra $\mathfrak{g}$ is said to be \textit{simple} whenever it does not contain 
 any nontrivial proper $\mathbb{Z}_2\times\mathbb{Z}_2$-graded ideal. In what follows, we will be interested in a certain family of simple $\mathbb{Z}_2\times\mathbb{Z}_2$-graded Lie algebras which we will call basic.
 
 Let $\mathfrak{g}$ be a $\mathbb{Z}_2\times\mathbb{Z}_2$-graded Lie algebra $\mathfrak{g}$ and let $\mathrm{ad}$ stand for \textit{the adjoint map of $\mathfrak{g}$}, i.e., the $\mathbb{Z}_2\times\mathbb{Z}_2$-graded derivation 
 \begin{equation*}
 	\mathrm{ad}:	\mathfrak{g}\rightarrow \mathfrak{gl}(\mathfrak{g})
 \end{equation*}
 of $\mathfrak{g}$
 defined by 
 $	\mathrm{ad}(x)(y):=[x,y]$, $ x,y\in\mathfrak{g}$. We define \textit{the Killing form of $\mathfrak{g}$} to be the bilinear form
 \begin{equation*}
 	K(x,y)=\mathrm{tr}(\mathrm{ad}x\circ\mathrm{ad}y), \quad x,y\in\mathfrak{g}.
 \end{equation*}
 We note, that $K$ is symmetric while it satisfies the identity
 \begin{equation*}
 	K([x,y],z)+(-1)^{ab}K(y,[x,z])=0, \quad \forall x\in\mathfrak{g}^a,y\in \mathfrak{g}^{b}, z\in\mathfrak{g}.
 \end{equation*}
 Moreover, since for $x\in\mathfrak{g}^a$ and $y\in\mathfrak{g}^b$,
 \begin{equation*}
 	\big(\mathrm{ad}(x)\circ \mathrm{ad}(y)\big)(\mathfrak{g}^c)\subseteq \mathfrak{g}^{a+b+c},
 \end{equation*}
 we have $K(x,y)\neq0$ only if  $a=b$. \\

 \begin{definition}\label{defafterdisc} A $\mathbb{Z}_2\times\mathbb{Z}_2$-graded Lie algebra is \textit{basic} if its Killing form $K$ is nondegenerate while the $(0,0)$-degree subspace $\mathfrak{g}^{(0,0)}$ is reductive. 
 	\end{definition}
 
 \begin{remark}\label{remark} We note that the "classical"  $\mathbb{Z}_2\times\mathbb{Z}_2$-graded Lie algebras appearing in \cite{stoil,stoil25} are basic. 
 	\end{remark}
 
 %
 
 
 \begin{lemma}\label{nonde}
 	If $\mathfrak{g}$ is a simple $\mathbb{Z}_2\times\mathbb{Z}_2$-graded Lie algebra with $\mathfrak{g}^{(0,0)}$ being reductive and nonabelian, then the Killing form $K$ of $\mathfrak{g}$ is nondegenerate so that $\mathfrak{g}$ is basic.
 \end{lemma}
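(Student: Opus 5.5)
The plan is to show that the radical
\begin{equation*}
\mathfrak{r}:=\{x\in\mathfrak{g}\mid K(x,y)=0,\ \forall y\in\mathfrak{g}\}
\end{equation*}
is a proper graded ideal. Simplicity then forces $\mathfrak{r}=\{0\}$.

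First, $\mathfrak{r}$ is graded. Write $x=\sum_a x_a$. Since $K(\mathfrak{g}^a,\mathfrak{g}^b)=0$ for $a\neq b$, we have $K(x,y_b)=K(x_b,y_b)$ for every $y_b\in\mathfrak{g}^b$. Hence $x\in\mathfrak{r}$ if and only if each $x_b\in\mathfrak{r}$.

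Next, $\mathfrak{r}$ is an ideal. Take homogeneous $x\in\mathfrak{g}^a$ and $r\in\mathfrak{r}\cap\mathfrak{g}^b$. Graded antisymmetry gives $[r,x]=-(-1)^{ab}[x,r]$, so it suffices to treat $[x,r]$. For any $z$, the invariance identity yields
\begin{equation*}
K([x,r],z)=\pm K(r,[x,z])=0 .
\end{equation*}
More precisely, rewrite $K([x,r],z)=-(-1)^{ab}K([r,x],z)=-(-1)^{ab}\bigl(-(-1)^{ba}K(x,[r,z])\bigr)$, then use symmetry of $K$ and invariance once more to move the bracket onto $r$. Alternatively, apply invariance with the roles arranged as $K([x,r],z)=-(-1)^{ab}K(r,[x,z])$, which vanishes because $r\in\mathfrak{r}$. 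So $\mathfrak{r}$ is a graded ideal.

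The main step is showing $\mathfrak{r}\neq\mathfrak{g}$, that is, $K\neq0$. This is where the nonabelian hypothesis on $\mathfrak{g}^{(0,0)}$ is used. Write $\mathfrak{g}^{(0,0)}=\mathfrak{z}\oplus\mathfrak{s}$ with $\mathfrak{s}=[\mathfrak{g}^{(0,0)},\mathfrak{g}^{(0,0)}]\neq0$ semisimple. For $x,y\in\mathfrak{s}$, the operator $\mathrm{ad}x\circ\mathrm{ad}y$ preserves each $\mathfrak{g}^c$, so
\begin{equation*}
K(x,y)=\sum_c \mathrm{tr}_{\mathfrak{g}^c}\bigl(\mathrm{ad}x\,\mathrm{ad}y\bigr)=\sum_c B_{\rho_c}(x,y),
\end{equation*}
where $\rho_c$ is the representation of $\mathfrak{s}$ on $\mathfrak{g}^c$ and $B_{\rho_c}$ its trace form. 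On the summand $c=(0,0)$, $\rho_{(0,0)}|_{\mathfrak{s}}$ contains the adjoint representation of $\mathfrak{s}$. Hence $B_{\rho_{(0,0)}}$ restricted to $\mathfrak{s}$ is the Killing form of $\mathfrak{s}$ plus nonnegative contributions.

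To make "nonnegative" precise, restrict to the real span $\mathfrak{h}_{\mathbb{R}}$ of coroots in a Cartan subalgebra of $\mathfrak{s}$. There every $\mathrm{ad}h$ acts on every finite-dimensional $\mathfrak{s}$-module with real eigenvalues. So each $B_{\rho_c}(h,h)=\sum\lambda(h)^2\geq0$, and the Killing form of $\mathfrak{s}$ is strictly positive on $h\neq0$. Therefore $K(h,h)>0$ for some $h$, so $K\neq0$, $\mathfrak{r}\neq\mathfrak{g}$, and $\mathfrak{r}=0$.

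Finally, $\mathfrak{g}^{(0,0)}$ is reductive by hypothesis and $K$ is nondegenerate, so $\mathfrak{g}$ is basic.
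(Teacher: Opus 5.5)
Your proof is correct and follows essentially the same route as the paper: the radical of $K$ is a graded ideal, and $K\neq 0$ because $K(h,h)$ is a sum of squares of real eigenvalues for a suitable semisimple element $h$ of the derived algebra of $\mathfrak{g}^{(0,0)}$. The only difference is in the last step. The paper argues by contradiction: if $K(h,h)=0$ for an $\mathfrak{sl}_2$-triple, then that $\mathfrak{sl}_2$-copy acts trivially on $\mathfrak{g}$ and would be a proper graded ideal. You instead obtain strict positivity directly from the adjoint summand $\mathfrak{s}\subseteq\mathfrak{g}^{(0,0)}$, so you do not need simplicity at that point.
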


\begin{proof}
Since the Killing form $K$ is homogeneous of degree $(0,0)$, its radical 
	\begin{equation*}
		\mathrm{rad}(K)\coloneqq \{x\in\mathfrak{g}\mid K(x,y)=0\text{ for all } y\in\mathfrak{g}\}
	\end{equation*}
is a $\mathbb{Z}_2\times\mathbb{Z}_2$-graded ideal of $\mathfrak{g}$ and from the simplicity of $\mathfrak{g}$ we deduce that $\mathrm{rad}(K)=\{0\}$ or $\mathrm{rad}(K)=\mathfrak{g}$.

We will show that $\mathrm{rad}(K)=\{0\}$. Suppose that $\mathrm{rad}(K)=\mathfrak{g}$ so that $K$ is trivial. Since $\mathfrak{g}^{(0,0)}$ is a nonabelian reductive Lie algebra, the derived Lie algebra $[\mathfrak{g}^{(0,0)},\mathfrak{g}^{(0,0)}]$ is semisimple so that it contains an $\mathfrak{sl}(2,\mathbb{C})$-copy, say $\mathfrak{s}$, generated by a standard $\mathfrak{sl}_2$-triplet $\{h,e,f\}$. 
The subalgebra $\mathfrak{s}$ acts on $\mathfrak{g}$, and the representation theory of $\mathfrak{sl}(2,\mathbb C)$ yields an
 eigenspace decomposition 
\begin{equation*}
	\mathfrak{g}=\bigoplus\limits_{n\in\mathbb{Z}}\mathfrak{g}(n)
\end{equation*}
for the action of $h$.
Then 
\begin{equation*}
	0=K(h,h)=\mathrm{tr}(\mathrm{ad}h\circ\mathrm{ad}h)=\sum\limits_{n\in\mathbb{Z}}n^2\dim\mathfrak{g}(n)
\end{equation*}
and we deduce that $\mathfrak{g}(0)=\mathfrak{g}$. In other words, $\mathfrak{s}$ acts trivially on $\mathfrak{g}$ so that $\mathfrak{s}$ is a nonzero $\mathbb{Z}_2\times\mathbb{Z}_2$-graded proper ideal of $\mathfrak{g}$ which is absurd because $\mathfrak{g}$ is simple and $\mathfrak{s}\subseteq\mathfrak{g}^{(0,0)}\subsetneq\mathfrak{g}$. Hence $\mathrm{rad}(K)=\{0\}$ and $K$ is nondegenerate.
\end{proof}

 \hspace{-4mm}\textbf{Assumption:} For the rest of our text, we assume that $\mathfrak{g}$ is a basic simple $\mathbb{Z}_2\times\mathbb{Z}_2$-graded Lie algebra. We may assume that $\mathfrak{g}$ is not a Lie algebra; hence $\mathfrak{g}\neq \mathfrak{g}^{(0,0)}$.

\subsection{Root theory for basic simple $\mathbb{Z}_2\times\mathbb{Z}_2$-graded Lie algebras}
Let $\mathfrak{t}$ be a Cartan subalgebra of the reductive Lie algebra $\mathfrak{g}^{(0,0)}$ and for every $\alpha\in\mathfrak{t}^*$ set 
\begin{equation*}
	\mathfrak{g}_\alpha=\{x\in\mathfrak{g}\mid \big(\mathrm{ad}(h)-\alpha(h)\mathrm{Id}\big)^nx=0, \forall h\in\mathfrak{t} \text{ for some } n=n(h,x)>0\}.
\end{equation*}
By abuse of terminology, we say that $\mathfrak{t}$ is a \textit{Cartan subalgebra of $\mathfrak{g}$}. Moreover, whenever $\alpha\neq0$ and $\mathfrak{g}_\alpha\neq \{0\}$, we say that  $\alpha\in\mathfrak{t}^*$ is a \textit{generalized root} of $\mathfrak{g}$ (associated with $\mathfrak{t}$) and $\mathfrak{g}_\alpha$ is \textit{the generalized root subspace} corresponding to $\alpha$. We denote by $\Delta$ the set of all generalized roots of $\mathfrak{g}$. 

A standard linear algebra argument shows that $\mathfrak{g}$ admits a decomposition
	\begin{equation}\label{triangular}
	\mathfrak{g}=\mathfrak{t}\oplus\bigoplus\limits_{\alpha\in\Delta}\mathfrak{g}_\alpha\oplus\bigoplus\limits_{a\neq (0,0)}\mathfrak{g}_0^a
\end{equation}
where $
	\mathfrak{g}^a_0\coloneqq\mathfrak{g}_0\cap\mathfrak{g}^{a}.
$
A standard argument (e.g., \cite[Proposition 2.5]{knappgreen}), shows that for every $\alpha,\beta\in\mathfrak{t}^*$, we have
\begin{equation*}
	[\mathfrak{g}_\alpha,\mathfrak{g}_\beta]\subseteq \mathfrak{g}_{\alpha+\beta}.
\end{equation*}
Moreover, whenever $\alpha,\beta\in\mathfrak{t}^*$ with $\alpha+\beta\neq0$, the corresponding generalized root subspaces $\mathfrak{g}_\alpha$ and $\mathfrak{g}_\beta$ are orthogonal with respect to $K$. In combination with the fact that $\mathfrak{g}^{a}$ and $\mathfrak{g}^b$ are orthogonal whenever $a\neq b$, we obtain the following lemma.

\begin{lemma}\label{pairing}  Let $\alpha,\beta\in\mathfrak{t}^*$ and $a,b\in\mathbb{Z}_2\times\mathbb{Z}_2$. 
	\begin{itemize}
		\item[(i)] The subspace $\mathfrak{g}_\alpha^a\coloneqq \mathfrak{g}_\alpha\cap \mathfrak{g}^a$ is nontrivial if and only if the subspace $\mathfrak{g}_{-\alpha}^a$ is nontrivial. In this case,	
	 the Killing form $K$ defines a nondegenerate pairing between $\mathfrak{g}_\alpha^a$ and $\mathfrak{g}_{-\alpha}^a$. 
	 \item[(ii)] If $\alpha+\beta\neq 0$ or $a\neq b$, the subspaces $\mathfrak{g}_\alpha^a$ and $\mathfrak{g}_\beta^b$ are orthogonal with respect to the Killing form $K$.
	\end{itemize}
\end{lemma}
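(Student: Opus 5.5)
Part (ii) comes first, since part (i) follows from it together with the nondegeneracy of $K$.

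\emph{Step 1: grading compatibility.} Each $\mathrm{ad}(h)$, $h\in\mathfrak{t}\subseteq\mathfrak{g}^{(0,0)}$, preserves every homogeneous component $\mathfrak{g}^a$. Hence the generalized eigenspace decomposition refines the grading: $\mathfrak{g}_\alpha=\bigoplus_a\mathfrak{g}_\alpha^a$, and $\mathfrak{g}=\bigoplus_{\alpha,a}\mathfrak{g}^a_\alpha$, where $\alpha$ runs over $\{0\}\cup\Delta$.

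\emph{Step 2: orthogonality.} If $a\neq b$, then $K(\mathfrak{g}^a,\mathfrak{g}^b)=0$, as already observed before Definition \ref{defafterdisc}. If $\alpha+\beta\neq0$, I use the invariance identity with $x=h\in\mathfrak{g}^{(0,0)}$, which gives $K([h,y],z)+K(y,[h,z])=0$. So $\mathrm{ad}(h)$ is skew with respect to $K$. Take $h\in\mathfrak{t}$ with $\alpha(h)+\beta(h)\neq0$, and $y\in\mathfrak{g}_\alpha$, $z\in\mathfrak{g}_\beta$. Write $\mathrm{ad}h=(\mathrm{ad}h-\alpha(h))+\alpha(h)$ on the first slot and similarly on the second. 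This yields
\[
(\alpha(h)+\beta(h))K(y,z)=-K\big((\mathrm{ad}h-\alpha(h))y,z\big)-K\big(y,(\mathrm{ad}h-\beta(h))z\big).
\]
I then argue by induction on $m+n$, where $(\mathrm{ad}h-\alpha(h))^m y=0$ and $(\mathrm{ad}h-\beta(h))^n z=0$. Both terms on the right involve vectors with smaller nilpotency index, so they vanish by the induction hypothesis. This is the standard argument of \cite[Proposition 2.5]{knappgreen} and proves (ii).

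\emph{Step 3: the pairing in (i).} Let $x\in\mathfrak{g}^a_\alpha$ with $K(x,\mathfrak{g}^a_{-\alpha})=0$. By (ii), $x$ is orthogonal to every other summand $\mathfrak{g}^b_\beta$ of the decomposition in Step 1. So $x\in\mathrm{rad}(K)=\{0\}$, because $\mathfrak{g}$ is basic. Hence the map $\mathfrak{g}^a_\alpha\to(\mathfrak{g}^a_{-\alpha})^*$ induced by $K$ is injective, and by symmetry so is the map in the other direction. This gives the nondegenerate pairing, equal dimensions, and the "if and only if" statement: if $\mathfrak{g}^a_\alpha\neq0$ then $\mathfrak{g}^a_{-\alpha}\neq0$, and conversely.

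The only step needing care is Step 2. The point to check there is that skewness of $\mathrm{ad}(h)$ holds with no sign, because $h$ has degree $(0,0)$ and so $(-1)^{(0,0)b}=1$. Once that is confirmed, the rest is routine.
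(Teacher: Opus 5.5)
Your proof is correct and follows the route the paper indicates. You combine the orthogonality of generalized weight spaces when $\alpha+\beta\neq0$ with the orthogonality of distinct graded components, and then use nondegeneracy of $K$ (your radical argument in Step 3) to get the pairing and the equivalence in (i). The only variation is in how you prove the $\alpha+\beta\neq0$ orthogonality: you use the $K$-skewness of $\mathrm{ad}(h)$ for $h\in\mathfrak{t}$ together with induction on nilpotency index, whereas the standard argument behind the paper's citation of Knapp uses $[\mathfrak{g}_\alpha,\mathfrak{g}_\beta]\subseteq\mathfrak{g}_{\alpha+\beta}$ to see that $\mathrm{ad}\,x\circ\mathrm{ad}\,y$ shifts generalized weights by $\alpha+\beta\neq0$, so it is nilpotent and hence traceless. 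Your attribution of your argument to Proposition 2.5 is therefore slightly off, but both arguments are valid.
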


As a consequence, $\alpha$ is a generalized root if and only if $-\alpha$ is a generalized root. Morevover, the restriction of the Killing form $K$ to $\mathfrak{t}$ is nondegenerate so that for every element $\alpha\in\mathfrak{t}^*$ there is a unique element $H_\alpha\in\mathfrak{t}$ such that 
\begin{equation*}
	\alpha(H)=K(H_\alpha,H) \text{ for all }H\in\mathfrak{t}.
\end{equation*}
We will denote by $\langle\cdot,\cdot\rangle$ the bilinear symmetric form defined on $\mathfrak{t}^*\times\mathfrak{t}^*$ by
\begin{equation*}
	\langle \alpha,\beta\rangle:=K(H_\alpha,H_\beta), \quad \alpha,\beta\in\mathfrak{t}^*.
\end{equation*}

For every nonzero subspace $\mathfrak{g}_\alpha^a$, fix a nonzero element $E_\alpha^a\in\mathfrak{g}_\alpha^a$ such that 
\begin{equation}\label{bracketaki}
	[H,E_\alpha^a]=\alpha(H)E_\alpha^a
\end{equation}
for every $H\in\mathfrak{t}$. This is always possible due to Lie's Theorem \cite[Theorem 1.25]{knappgreen}.

\begin{lemma}\label{XEalpha}
	For every $X\in\mathfrak{g}_{-\alpha}^a$, we have
	\begin{equation*}
		[E_\alpha^a,X]=K(E_\alpha^a,X)H_\alpha.
	\end{equation*}
\end{lemma}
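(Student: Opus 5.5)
The argument is the standard one from the semisimple case: show that $[E_\alpha^a,X]$ lies in $\mathfrak{t}$, then identify it through the nondegeneracy of $K|_{\mathfrak{t}}$.

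\emph{Degree and weight.} Since $E_\alpha^a\in\mathfrak{g}^a$ and $X\in\mathfrak{g}^a$, we have $[E_\alpha^a,X]\in\mathfrak{g}^{a+a}=\mathfrak{g}^{(0,0)}$. By the inclusion $[\mathfrak{g}_\alpha,\mathfrak{g}_{-\alpha}]\subseteq\mathfrak{g}_0$, it also lies in $\mathfrak{g}_0\cap\mathfrak{g}^{(0,0)}$. This space is the generalized $0$-weight space of $\mathfrak{t}$ acting on the reductive Lie algebra $\mathfrak{g}^{(0,0)}$, that is, the generalized centralizer of $\mathfrak{t}$ in $\mathfrak{g}^{(0,0)}$. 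Because $\mathfrak{t}$ is a Cartan subalgebra of a reductive Lie algebra, it is nilpotent and self-normalizing, so this generalized centralizer equals $\mathfrak{t}$. (This is also implicit in the decomposition \eqref{triangular}, where the $(0,0)$-part of $\mathfrak{g}_0$ is exactly $\mathfrak{t}$.) Hence $[E_\alpha^a,X]\in\mathfrak{t}$.

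\emph{Pairing with $\mathfrak{t}$.} Let $H\in\mathfrak{t}$ be arbitrary. Apply the invariance identity
\[
K([x,y],z)+(-1)^{cd}K(y,[x,z])=0
\]
with $x=E_\alpha^a$ (degree $a$), $y=X$ (degree $a$) and $z=H$. Since $aa=0$ by antisymmetry of the determinant, the sign is $+1$, and we get
\[
K([E_\alpha^a,X],H)=-K(X,[E_\alpha^a,H]).
\]
By graded antisymmetry, with $H$ of degree $(0,0)$, and by \eqref{bracketaki},
\[
[E_\alpha^a,H]=-[H,E_\alpha^a]=-\alpha(H)E_\alpha^a.
\]
Using the symmetry of $K$, this gives
\[
K([E_\alpha^a,X],H)=\alpha(H)K(E_\alpha^a,X)=K\big(K(E_\alpha^a,X)H_\alpha,H\big).
\]

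\emph{Conclusion.} Both $[E_\alpha^a,X]$ and $K(E_\alpha^a,X)H_\alpha$ lie in $\mathfrak{t}$, and by the previous step they have the same $K$-pairing with every $H\in\mathfrak{t}$. Since $K|_{\mathfrak{t}}$ is nondegenerate, the two elements are equal.

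The only step needing care is the first, namely that $\mathfrak{g}_0\cap\mathfrak{g}^{(0,0)}=\mathfrak{t}$. The plan is to justify it by the reductive structure of $\mathfrak{g}^{(0,0)}$: its Cartan subalgebra $\mathfrak{t}$ acts semisimply on $\mathfrak{g}^{(0,0)}$ with zero weight space $\mathfrak{t}$. The remaining steps are routine sign bookkeeping.
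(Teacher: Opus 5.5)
Your proposal is correct and follows essentially the paper's argument: invariance of $K$, nondegeneracy of $K|_{\mathfrak{t}}$, and the degree constraint $[\mathfrak{g}^a,\mathfrak{g}^a]\subseteq\mathfrak{g}^{(0,0)}$. The only difference is the order of the steps. You first place $[E_\alpha^a,X]$ in $\mathfrak{g}_0\cap\mathfrak{g}^{(0,0)}=\mathfrak{t}$ and then pair it with $\mathfrak{t}$, whereas the paper pairs first to identify the $\mathfrak{t}$-component and then uses the grading to discard the components in $\bigoplus_{b\neq(0,0)}\mathfrak{g}_0^b$.
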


\begin{proof} The proof follows a standard argument (e.g., \cite[Lemma 2.18]{knappgreen}).
	For every $H\in\mathfrak{t}$, we have
	\begin{align*}
		K([E_\alpha^a,X],H)&=(-1)^{a(a+(0,0))}K(X,[H,E_\alpha^a])\\
		&=K(X,[H,E_\alpha^a])\\
		&=\alpha(H)K(X,E_\alpha^a)\\
		&=K(H,H_\alpha)K(X,E_\alpha^a)\\
		&=K(K(X,E_\alpha^a)H_\alpha,H)
	\end{align*}
so that 
\begin{equation*}
	[E_\alpha^a,X]=K(E_\alpha^a,X)H_\alpha+\text{terms from }\bigoplus\limits_{b\neq(0,0)}\mathfrak{g}_0^b.
	\end{equation*}
Since $[E_\alpha^a,X]\in[\mathfrak{g}^a,\mathfrak{g}^a]\subseteq \mathfrak{g}^{(0,0)}$, we conclude that 
\begin{equation*}
	[E_\alpha^a,X]=K(E_\alpha^a,X)H_\alpha.
\end{equation*}
\end{proof}

\begin{lemma}\label{onedim}
	For every $\alpha\in\mathfrak{t}^*$ and $a\in\mathbb{Z}_2\times\mathbb{Z}_2$, we have
	\begin{equation*}
		\dim\mathfrak{g}_\alpha^a\leq 1.
		\end{equation*}
\end{lemma}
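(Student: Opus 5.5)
The plan follows the classical argument for one-dimensionality of root spaces (e.g.\ \cite[Proposition 2.21]{knappgreen}). Throughout I take $\alpha\neq 0$. For $\alpha=0$ and $a=(0,0)$ we have $\mathfrak{g}_0^{(0,0)}=\mathfrak{t}$, so the statement must be read for generalized roots. The key observation is that for every $a\in\mathbb{Z}_2\times\mathbb{Z}_2$ we have $aa=a_1a_2-a_2a_1=0$. Consequently all signs $(-1)^{aa}$ in Definition~\ref{firstdef} equal $1$ whenever both entries have the same degree $a$. Let $\mathfrak{s}$ be the span of $E_\alpha^a$, $E_{-\alpha}^a$ and $H_\alpha$. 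On $\mathfrak{s}$ the graded identities reduce to the ordinary ones: $\mathrm{ad}[x,y]=[\mathrm{ad}x,\mathrm{ad}y]$ as an honest commutator for $x,y$ of degree $a$, and $\mathfrak{s}$ acts on $\mathfrak{g}$ as an ordinary Lie algebra.

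\textbf{Step 1: $\langle\alpha,\alpha\rangle\neq0$.} By Lemma~\ref{pairing}(i), choose $E_{-\alpha}^a$ with $K(E_\alpha^a,E_{-\alpha}^a)=1$. Lemma~\ref{XEalpha} then gives $[E_\alpha^a,E_{-\alpha}^a]=H_\alpha$. Fix any $\beta\in\mathfrak{t}^*$ and set $W=\bigoplus_{n\in\mathbb{Z}}\mathfrak{g}_{\beta+n\alpha}$, taking all degrees. Both $\mathrm{ad}E_{\pm\alpha}^a$ preserve $W$, so $\mathrm{ad}H_\alpha|_W$ is a commutator and has trace zero. Computing the trace with generalized eigenvalues gives
\[
\sum_n\bigl(\beta(H_\alpha)+n\langle\alpha,\alpha\rangle\bigr)\dim\mathfrak{g}_{\beta+n\alpha}=0 .
\]
Suppose $\langle\alpha,\alpha\rangle=0$. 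Then $\beta(H_\alpha)=0$ for every $\beta$ with $\mathfrak{g}_\beta\neq0$. For any $H\in\mathfrak{t}$ this yields
\[
K(H_\alpha,H)=\sum_\beta\beta(H_\alpha)\beta(H)\dim\mathfrak{g}_\beta=0 ,
\]
since the trace is computed on generalized weight spaces. Nondegeneracy of $K|_{\mathfrak{t}}$ then forces $H_\alpha=0$, i.e.\ $\alpha=0$, a contradiction.

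\textbf{Step 2: the $\mathfrak{sl}_2$ argument.} Normalize $h=\frac{2}{\langle\alpha,\alpha\rangle}H_\alpha$, $e=E_\alpha^a$ and $f=\frac{2}{\langle\alpha,\alpha\rangle}E_{-\alpha}^a$. Since $[h,e]=2e$ and $[h,f]=-2f$ by \eqref{bracketaki}, this is a standard triple spanning a copy of $\mathfrak{sl}(2,\mathbb{C})$ acting on $\mathfrak{g}$ by ordinary representations. Suppose $\dim\mathfrak{g}_\alpha^a\geq2$; by the pairing of Lemma~\ref{pairing}(i), equivalently $\dim\mathfrak{g}_{-\alpha}^a\geq2$. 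The linear functional $K(E_\alpha^a,\cdot)$ on $\mathfrak{g}_{-\alpha}^a$ then has a nonzero kernel. Choose $X\neq0$ in it, taken to be an eigenvector for $\mathfrak{t}$ via Lie's theorem on that kernel (which is $\mathfrak{t}$-stable, because $K(E_\alpha^a,[H,X])=-K([H,E_\alpha^a],X)=-\alpha(H)K(E_\alpha^a,X)$). Lemma~\ref{XEalpha} gives $[e,X]=0$, while $[h,X]=-2X$. So $X$ is a highest weight vector of weight $-2$ in the finite-dimensional $\mathfrak{sl}_2$-module $\mathfrak{g}$. This contradicts the fact that highest weights of finite-dimensional $\mathfrak{sl}_2$-modules are nonnegative integers, so $\dim\mathfrak{g}_{\pm\alpha}^a\le1$.

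\textbf{Main obstacle.} I expect the delicate point to be Step~1, namely justifying the trace formula with generalized rather than genuine eigenspaces. This is fine: traces only see generalized eigenvalues, and $\mathrm{ad}H$ preserves each $\mathfrak{g}_\beta$. It also requires checking that the graded Jacobi identity really gives $\mathrm{ad}H_\alpha=[\mathrm{ad}E_\alpha^a,\mathrm{ad}E_{-\alpha}^a]$ with no sign, which is exactly where $aa=0$ is used.
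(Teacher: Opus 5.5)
\textbf{There is a genuine gap in Step~2, at the claim $[h,f]=-2f$ ``by \eqref{bracketaki}''.} Step~1 is fine: it is the paper's own argument for $\alpha(H_\alpha)\neq0$. You are also right that the statement is meant for $\alpha\neq0$. The delicate point is not Step~1, as you suspected, but Step~2.

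The $E_{-\alpha}^a$ you use was chosen via Lemma~\ref{pairing}(i) only so that $K(E_\alpha^a,E_{-\alpha}^a)=1$. That gives an element of the \emph{generalized} weight space $\mathfrak{g}_{-\alpha}^a$, not a $\mathfrak{t}$-eigenvector. At this stage you cannot impose both conditions at once: semisimplicity of $\mathfrak{t}$ on $\mathfrak{g}_{-\alpha}^a$ is Corollary~\ref{split}, which is deduced from the very lemma you are proving.

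Your own invariance computation shows why this matters. It gives that $K(E_\alpha^a,\cdot)$ vanishes on $\sum_{H\in\mathfrak{t}}(\mathrm{ad}H+\alpha(H))\mathfrak{g}_{-\alpha}^a$. Suppose $\mathfrak{t}$ acted on a $2$-dimensional $\mathfrak{g}_{-\alpha}^a$ through a nontrivial Jordan block. Then every eigenvector would lie in that image, hence in $\ker K(E_\alpha^a,\cdot)$. So no eigenvector $f$ with $[e,f]=h$ would exist. That is exactly the configuration your contradiction has to rule out, so the argument is circular there. Without $[h,f]=-2f$, the elements $h,e,f$ need not span $\mathfrak{sl}(2,\mathbb{C})$. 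The paper stresses this: the algebra generated by $H_\alpha,E_\alpha^a,X_{-\alpha}^a$ may have dimension greater than three. The ``highest weight $-2$'' contradiction is then unavailable.

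The paper avoids $\mathfrak{sl}_2$-theory at this point. Take any $X\in\mathfrak{g}_{-\alpha}^a$ with $K(E_\alpha^a,X)=1$. The space
\[
M_1=\mathbb{C}E_\alpha^a\oplus\mathbb{C}H_\alpha\oplus\bigoplus_{n\geq1}\mathfrak{g}_{-n\alpha}^{na}
\]
is stable under $\mathrm{ad}E_\alpha^a$ and $\mathrm{ad}X$. This uses Lemma~\ref{XEalpha} on $\mathfrak{g}_{-\alpha}^a$, the identity $(n+1)a=(n-1)a$, and $[E_\alpha^a,E_\alpha^a]=0$. Taking the trace of $\mathrm{ad}H_\alpha=[\mathrm{ad}E_\alpha^a,\mathrm{ad}X]$ on $M_1$ gives
\[
\langle\alpha,\alpha\rangle\Bigl(1-\sum_{n\geq1}n\dim\mathfrak{g}_{-n\alpha}^{na}\Bigr)=0.
\]
Together with your Step~1, this forces $\dim\mathfrak{g}_{-\alpha}^a=1$, hence $\dim\mathfrak{g}_\alpha^a=1$. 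Only generalized weight vectors are needed.

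Alternatively, you can keep your Step~2 if you first produce a genuine $f$ by Morozov's lemma in the ordinary Lie algebra $\mathfrak{g}^{(0,0)}\oplus\mathfrak{g}^a$, using $[h,e]=2e$ and $h\in[e,\mathfrak{g}^a]$. That is an additional, nontrivial input and must be cited; it does not follow from \eqref{bracketaki}.
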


\begin{proof}
	Fix $\alpha\in \Delta$ and $a\in\mathbb{Z}_2\times\mathbb{Z}_2$ and assume that $\mathfrak{g}_\alpha^a$ is nonzero so that there exists a nonzero element $E_\alpha^a\in\mathfrak{g}_\alpha^a$ satisfying \eqref{bracketaki}. According to Lemma \ref{pairing}, there exists
	 $X_{-\alpha}^a\in\mathfrak{g}_{-\alpha}^a$ such that $K(E_\alpha^a,X_{-\alpha}^a)=1$. Set $\mathfrak{s}_\alpha^a$ to be the $\mathbb{Z}_2\times\mathbb{Z}_2$-graded Lie subalgebra generated by $H_\alpha,E_\alpha^a$ and $X_{-\alpha}^a$. Although 
	 $\mathfrak{s}_\alpha^a$
	 is defined as a graded subalgebra, it is in fact an ordinary Lie algebra, since $\mathfrak{g}^{(0,0)}\oplus\mathfrak{g}^a$
	 is closed under the bracket and the graded antisymmetry and Jacobi identities reduce to the usual Lie algebra identities on this subspace.
	 Notice that since $X_{-\alpha}^a$ is not necessarily a proper root vector of $\mathfrak{t}$, the Lie algebra $\mathfrak{s}_\alpha^a$ may be of dimension greater than three. 
	 
	 Set
	\begin{equation*}
		M_1\coloneqq \mathbb{C}E_\alpha^a\oplus \mathbb{C}H_\alpha\oplus\bigoplus\limits_{n\geq 1}\mathfrak{g}_{-n\alpha}^{na}.
		\end{equation*}
	Let us check that $\mathfrak{s}_\alpha^a$ acts on $M_1$.
	It suffices to check that the generators $E_\alpha^a$, $H_\alpha$ and $X_{-\alpha}^a$ of $\mathfrak{s}_\alpha^a$ preserves $M_1$. It is clear that $H_\alpha$ acts on $M_1$. For $E_\alpha^a$, we have
	\begin{equation*}
		[E_\alpha^a,E_\alpha^a]=0,\quad [E_\alpha^a,H_\alpha]=-\alpha(H_\alpha)E_\alpha^a\in M_1
	\end{equation*}
and 
\begin{equation*}
	[E_\alpha^a,\mathfrak{g}_{-n\alpha}^{na}]\subseteq \mathfrak{g}_{-(n-1)\alpha}^{(n+1)a}=\mathfrak{g}_{-(n-1)\alpha}^{(n-1)a}\subset M_1,
\end{equation*}
for $n\geq 2$ while, according to Lemma \ref{XEalpha}, for $n=1$ we have
\begin{equation*}
	[E_\alpha^a,\mathfrak{g}_{-\alpha}^a]\subseteq \mathbb{C}H_\alpha\subseteq M_1.
\end{equation*}
For $X_{-\alpha}^a$, we have
\begin{equation*}
	[X_{-\alpha}^a,E_{\alpha}^a]=-K(E_{\alpha}^a,X_{-\alpha}^a)H_\alpha=-H_\alpha\in M_1,\quad  [X_{-\alpha}^a,H_{\alpha}]\in\mathfrak{g}_{-\alpha}^a\subset M_1
\end{equation*}
and 
\begin{equation*}
	[X_{-\alpha}^a,\mathfrak{g}_{-n\alpha}^{na}]\subseteq \mathfrak{g}_{-(n+1)\alpha}^{(n+1)a}\subset M_1.
\end{equation*}
Therefore, $M_1$ is a representation of the Lie algebra $\mathfrak{s}_\alpha^a$.

The element $H_\alpha=[E_\alpha^a,X_{-\alpha}^a]$ acts by a traceless operator on $M_1$ so that
\begin{align*}
	0&=\mathrm{tr}(\mathrm{ad}H_\alpha)=\alpha(H_\alpha)+0-\alpha(H_\alpha)\sum\limits_{n\geq 1}n\dim \mathfrak{g}_{-n\alpha}^{na}\\&=\alpha(H_\alpha)(1-\sum\limits_{n\geq0}n\dim\mathfrak{g}_{-n\alpha}^{na}).
\end{align*} 
In order to prove the statement, it suffices to show that $\alpha(H_\alpha)\neq 0$. Indeed, if $\alpha(H_\alpha)\neq0$, then 
\begin{equation}\label{dimns1}
	\sum\limits_{n\geq0}n\dim\mathfrak{g}_{-n\alpha}^{na}=1
\end{equation}
and we deduce that $\dim\mathfrak{g}_{-\alpha}^a=1$. Then, from Lemma \ref{pairing}(i), $\dim\mathfrak{g}_\alpha^a=1$. 

 Let us now prove that $\alpha(H_\alpha)\neq0$. Suppose that $\alpha(H_\alpha)=0$. Choose $\beta\in\Delta$ and set 
\begin{equation*}
	M_2\coloneqq \bigoplus\limits_{n\in\mathbb{Z}}\mathfrak{g}_{\beta+n\alpha}.
\end{equation*}
Then $M_2$ is a representation of  $\mathfrak{s}_\alpha^a$ and the element $H_\alpha$ acts, as before, by a traceless operator on $M_2$. Hence
\begin{equation*}
	0=\sum\limits_{n\in\mathbb{Z}}\big(\beta(H_\alpha)+n\alpha(H_\alpha)\big)\dim\mathfrak{g}_{\beta+n\alpha}=\beta(H_\alpha)\sum\limits_{n\in\mathbb{Z}}\dim\mathfrak{g}_{\beta+n\alpha}
\end{equation*}
and $\beta(H_\alpha)=0$. Since $\beta$ was chosen to be any element of $\Delta$, we conclude that $\mathrm{ad}H_\alpha$ acting on $\mathfrak{g}$ is a nilpotent operator so that for any $H\in\mathfrak{t}$, $\mathrm{ad}H_\alpha\circ \mathrm{ad}H$ is a nilpotent operator, too. Then
\begin{equation*}
	K(H_\alpha,H)=\mathrm{tr}(\mathrm{ad}H_\alpha\circ\mathrm{ad}H)=0
\end{equation*}
for every $H\in\mathfrak{t}$ and from Lemma \ref{nonde} we conclude that $H_\alpha=0$ which is absurd. Hence $\alpha(H_\alpha)\neq 0$.
\end{proof}

\begin{remark}
	From \eqref{dimns1}, one deduces that for every $\alpha\in\Delta$ and $n>1$ we have $\mathfrak{g}_{n\alpha}=\{0\}$.
	\end{remark}

 \begin{corollary}\label{split} For every $\alpha\in\Delta$, the generalized root subspace $\mathfrak{g}_\alpha$ is a root subspace, i.e., 
 	\begin{equation*}
 		\mathfrak{g}_\alpha=\{X\in\mathfrak{g}\mid \mathrm{ad}H(X)=\alpha(H)X\text{ for every } X\in\mathfrak{g}\}.
 	\end{equation*}
 \end{corollary}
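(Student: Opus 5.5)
The plan is to combine Lemma \ref{onedim} with the grading decomposition. Fix $\alpha\in\Delta$. Since $\mathfrak{t}\subseteq\mathfrak{g}^{(0,0)}$ and $\mathrm{ad}H$ preserves each homogeneous component $\mathfrak{g}^a$, the generalized eigenspace $\mathfrak{g}_\alpha$ is a graded subspace. Indeed, if $x=\sum_a x^a$ with $x^a\in\mathfrak{g}^a$, then $(\mathrm{ad}H-\alpha(H))^n x=\sum_a(\mathrm{ad}H-\alpha(H))^n x^a$. Each summand lies in a different $\mathfrak{g}^a$, so all summands vanish once the sum does. Hence
\begin{equation*}
	\mathfrak{g}_\alpha=\bigoplus_{a\in\mathbb{Z}_2\times\mathbb{Z}_2}\mathfrak{g}_\alpha^a .
\end{equation*}

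Next, each $\mathfrak{g}_\alpha^a$ is stable under $\mathrm{ad}\,\mathfrak{t}$ and, by Lemma \ref{onedim}, has dimension at most one. Suppose $\mathfrak{g}_\alpha^a\neq\{0\}$. It is spanned by the element $E_\alpha^a$ chosen in \eqref{bracketaki}, and this element satisfies $[H,E_\alpha^a]=\alpha(H)E_\alpha^a$ for all $H\in\mathfrak{t}$. More directly, on a one-dimensional invariant subspace each $\mathrm{ad}H$ acts by a scalar. That scalar must be $\alpha(H)$, since the only eigenvalue of $\mathrm{ad}H$ on $\mathfrak{g}_\alpha$ is $\alpha(H)$.

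Therefore every element of $\mathfrak{g}_\alpha$ is a sum of genuine $\alpha$-eigenvectors, which gives $\mathfrak{g}_\alpha\subseteq\{X\mid [H,X]=\alpha(H)X\ \forall H\in\mathfrak{t}\}$. The reverse inclusion is immediate from the definition of the generalized eigenspace with $n=1$. This proves the corollary.

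There is no real obstacle here. The only point needing care is the first step: verifying that the generalized root space decomposes along the $\mathbb{Z}_2\times\mathbb{Z}_2$-grading. This uses $\mathfrak{t}\subseteq\mathfrak{g}^{(0,0)}$, so that $\mathrm{ad}\,\mathfrak{t}$ preserves each $\mathfrak{g}^a$. After that, the dimension bound of Lemma \ref{onedim} does all the work.
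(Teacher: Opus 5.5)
Your proof is correct and follows the paper's argument: you decompose $\mathfrak{g}_\alpha=\bigoplus_a\mathfrak{g}_\alpha^a$ along the $\mathbb{Z}_2\times\mathbb{Z}_2$-grading and use Lemma \ref{onedim} to see that each nonzero $\mathfrak{g}_\alpha^a$ is spanned by a genuine $\alpha$-eigenvector. You also supply two details the paper leaves implicit: that $\mathfrak{g}_\alpha$ is a graded subspace because $\mathfrak{t}\subseteq\mathfrak{g}^{(0,0)}$, and that the pieces are only \emph{at most} one-dimensional.
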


\begin{proof}
	The subspace $\mathfrak{g}_\alpha^a$ decomposes into
	\begin{equation*}
		\mathfrak{g}_\alpha=\bigoplus\limits_{a\in\mathbb{Z}_2\times\mathbb{Z}_2}\mathfrak{g}_\alpha^a
	\end{equation*}
where each $\mathfrak{g}_\alpha^a$ is, according to Lemma \ref{onedim}, $1$-dimensional and hence generated by a proper root vector.
\end{proof}

As a consequence of Corollary \ref{split}, for every $\alpha\in\Delta$ and $a\in\mathbb{Z}_2\times\mathbb{Z}_2$, whenever $\mathfrak{g}_\alpha^a\neq 0$, $\mathfrak{g}_\alpha^a$ is generated by the element $E_\alpha^a$ defined above and satisfying \eqref{bracketaki}. Fix a root $\alpha\in\Delta$ and $a\in\mathbb{Z}_2\times\mathbb{Z}_2$ with $\mathfrak{g}_\alpha^a\neq 0$ and consider the Lie algebra $\mathfrak{s}_\alpha^a$ defined in the proof of Lemma \ref{onedim}, i.e., the Lie algebra generated, as a Lie algebra, by the elements $H_\alpha,E_\alpha^a$ and $E_{-\alpha}^a$. Then $\mathfrak{s}_\alpha^a$, now as a vector space, is generated by $H_\alpha, E_\alpha^a$ and $E_{-\alpha}^a$. Indeed, we have
	\begin{align*}
	[H_\alpha,E_\alpha^a]&=\alpha(H_\alpha)E_\alpha^a,\\
	[H_{\alpha},E_{-\alpha}^a]&=-\alpha(H_\alpha)E_{-\alpha}^a
\end{align*}
	and, according to Lemma \ref{nonde},
\begin{equation*}
	[E_\alpha^a,E_{-\alpha}^a]=K(E_\alpha^a,E_{-\alpha}^a)H_\alpha.
\end{equation*}
According to Lemma \ref{pairing}, $K(E_\alpha^a,E_{-\alpha}^a)\neq 0$ and we may assume that \begin{equation*}K(E_\alpha^a,E_{-\alpha}^a)=1.
	\end{equation*}
 On the other hand, in the proof of Lemma \ref{onedim}, we saw that \begin{equation*}\langle\alpha,\alpha\rangle=a(H_\alpha)\neq0.
 	\end{equation*}
Set
\begin{align*}
	h_\alpha&\coloneqq \frac{2}{\langle\alpha,\alpha\rangle}H_\alpha,\\
	x_\alpha^a&\coloneqq \frac{2}{\langle\alpha,\alpha\rangle}E_\alpha^a,\\
	y_\alpha^a&\coloneqq E_{-\alpha}^a.
\end{align*}
Then the elements $h_\alpha,x_\alpha^a$ and $y_\alpha^a$ form a standard $\mathfrak{sl}_2$-triplet for $\mathfrak{s}_\alpha^a$. Therefore, we obtain the following lemma which will be utilized extensively in what follows.

\begin{lemma}\label{sltriple}
	The Lie algebra $\mathfrak{s}_\alpha^a$ generated by the elements $h_\alpha, x_\alpha^a$ and $y_{\alpha}^a$ is isomorphic to $\mathfrak{sl}(2,\mathbb{C})$.
\end{lemma}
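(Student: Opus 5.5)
We verify directly that $h_\alpha, x_\alpha^a, y_\alpha^a$ satisfy the $\mathfrak{sl}(2,\mathbb{C})$ relations, that they span $\mathfrak{s}_\alpha^a$, and that they are linearly independent. The linear map sending the standard basis $h,e,f$ of $\mathfrak{sl}(2,\mathbb{C})$ to $h_\alpha, x_\alpha^a, y_\alpha^a$ will then be a Lie algebra isomorphism.

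\emph{The bracket relations.} First recall why $\mathfrak{s}_\alpha^a$ is an ordinary Lie algebra, as observed in the proof of Lemma \ref{onedim}. The subspace $\mathfrak{g}^{(0,0)}\oplus\mathfrak{g}^a$ is closed under the bracket, because $a+a=(0,0)$. On it, the signs $(-1)^{ab}$ are all $+1$, since $(a_1,a_2)(a_1,a_2)=a_1a_2-a_2a_1=0$ and $(0,0)\cdot b=0$. Hence conditions (ii) and (iii) of Definition \ref{firstdef} reduce to ordinary antisymmetry and the Jacobi identity. We compute the brackets using \eqref{bracketaki}, Lemma \ref{XEalpha}, and the normalization $K(E_\alpha^a,E_{-\alpha}^a)=1$:
\begin{equation*}
[h_\alpha,x_\alpha^a]=\frac{2}{\langle\alpha,\alpha\rangle}\alpha(H_\alpha)\,x_\alpha^a=2x_\alpha^a,\qquad [h_\alpha,y_\alpha^a]=-\frac{2}{\langle\alpha,\alpha\rangle}\alpha(H_\alpha)\,y_\alpha^a=-2y_\alpha^a,
\end{equation*}
because $\alpha(H_\alpha)=K(H_\alpha,H_\alpha)=\langle\alpha,\alpha\rangle$. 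Similarly,
\begin{equation*}
[x_\alpha^a,y_\alpha^a]=\frac{2}{\langle\alpha,\alpha\rangle}K(E_\alpha^a,E_{-\alpha}^a)H_\alpha=h_\alpha .
\end{equation*}
These steps are legitimate because $\langle\alpha,\alpha\rangle\neq0$ was shown in the proof of Lemma \ref{onedim}. That nonvanishing is the only substantive input; everything else is bookkeeping.

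\emph{Spanning.} The span of $H_\alpha, E_\alpha^a, E_{-\alpha}^a$ contains the generators of $\mathfrak{s}_\alpha^a$. The brackets above show that this span is closed under the bracket. The remaining brackets $[E_\alpha^a,E_\alpha^a]$ and $[E_{-\alpha}^a,E_{-\alpha}^a]$ vanish by ordinary antisymmetry. Alternatively, they lie in $\mathfrak{g}_{\pm2\alpha}$, which is zero by the Remark after Lemma \ref{onedim}. Hence $\mathfrak{s}_\alpha^a=\mathrm{span}\{h_\alpha,x_\alpha^a,y_\alpha^a\}$.

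\emph{Linear independence.} The three elements lie in the distinct $\mathfrak{t}$-weight spaces $\mathfrak{g}_0$, $\mathfrak{g}_\alpha$ and $\mathfrak{g}_{-\alpha}$, with $\alpha\neq0$. Moreover, each is nonzero: $H_\alpha\neq0$ because $\alpha\neq0$, and $E_{\pm\alpha}^a\neq0$ by choice. Therefore $\dim\mathfrak{s}_\alpha^a=3$. The linear map sending $h\mapsto h_\alpha$, $e\mapsto x_\alpha^a$, $f\mapsto y_\alpha^a$ is thus a bijection that preserves the defining relations $[h,e]=2e$, $[h,f]=-2f$, $[e,f]=h$. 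It is therefore an isomorphism $\mathfrak{sl}(2,\mathbb{C})\cong\mathfrak{s}_\alpha^a$.
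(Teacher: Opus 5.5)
Your proof is correct and is essentially the paper's argument. You normalize $K(E_\alpha^a,E_{-\alpha}^a)=1$, use Lemma~\ref{XEalpha} together with $\alpha(H_\alpha)=\langle\alpha,\alpha\rangle\neq0$ to obtain the standard $\mathfrak{sl}_2$ relations after rescaling, and observe that the span of $H_\alpha,E_\alpha^a,E_{-\alpha}^a$ is already closed under the bracket. Your linear-independence check via the distinct $\mathfrak{t}$-weights $0,\pm\alpha$ and your justification that $\mathfrak{s}_\alpha^a$ is an ordinary Lie algebra make explicit what the paper leaves implicit, and you correctly cite Lemma~\ref{XEalpha} where the paper's text points to Lemma~\ref{nonde}.
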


 Let 
\begin{equation*}
	\mathfrak{t}_0\coloneqq \sum\limits_{\alpha\in\Delta}\mathbb{R}H_\alpha.
\end{equation*}
By using a standard argument (e.g., \cite[Corollary 2.38]{knappgreen}), one can show that $\mathfrak{t}_0$ is a real form of $\mathfrak{t}$. Namely, if an element $H$ of $\mathfrak{t}$ is orthogonal to $\mathfrak{t}_0$, then it must be a central element of $\mathfrak{g}$ so that $H$ must be trivial. Moreover, the restriction of the complex bilinear form $\langle\cdot,\cdot\rangle$ on $\mathfrak{t}_0\times\mathfrak{t}_0$ is a positive-definite inner product. All we have to do is to reproduce arguments used in the context of complex semisimple Lie algebras \cite[Section II.4]{knappgreen}. For the sake of completeness, let us shortly present the argument which shows that $\langle\cdot,\cdot\rangle$ is an inner product on $\mathfrak{t}_0$.
 For $\beta\in\Delta$, consider the representation
	\begin{equation*}
	M_2=\bigoplus\limits_{n\in\mathbb{Z}}\mathfrak{g}_{\beta+n\alpha}
\end{equation*}
of $\mathfrak{s}_\alpha^a$ and the set
\begin{equation*}
\{n\in\mathbb{Z}\mid \beta+n\alpha\in\Delta\sqcup\{0\}\}.
\end{equation*}
Let $-p_\beta$ (respectively $q_\beta$) be the smallest (respectively largest) element of the above set.
Then from the representation theory of $\mathfrak{s}_\alpha^a\cong \mathfrak{sl}(2,\mathbb{C})$ on $M_2$, we deduce \cite[Proposition 2.29]{knappgreen} that
\begin{equation}\label{realrel}
	p_\beta-q_\beta=2\frac{\langle\beta,\alpha\rangle}{\langle\alpha,\alpha\rangle}
\end{equation}
and 
\begin{align*}
	\langle\alpha,\alpha\rangle&=K(H_\alpha,H_\alpha)\\
	&=\mathrm{tr}(\mathrm{ad}H_\alpha\circ \mathrm{ad}H_\alpha)\\
	&=\sum\limits_{\gamma\in \Delta} \gamma(H_\alpha)^2\dim\mathfrak{g}_\gamma\\
	&=\sum\limits_{\gamma\in \Delta} \langle\gamma,\alpha\rangle^2\dim\mathfrak{g}_\gamma\\
	&=\sum\limits_{\gamma\in \Delta} \dfrac{\langle\alpha,\alpha\rangle^2}{4}\big(2\frac{\langle\gamma,\alpha\rangle}{\langle\alpha,\alpha\rangle}\big)^2\dim\mathfrak{g}_\gamma\\
	&=\sum\limits_{\gamma\in \Delta} \dfrac{\langle\alpha,\alpha\rangle^2}{4}\big(p_\gamma-q_\gamma)^2\dim\mathfrak{g}_\gamma.
\end{align*}
Since from the proof of Lemma \ref{onedim} we have $\alpha(H_\alpha)=\langle\alpha,\alpha\rangle\neq0$, we deduce that
\begin{equation*}
	\langle\alpha,\alpha\rangle=\frac{4}{\sum\limits_{\gamma\in \Delta} \big(p_\gamma-q_\gamma)^2\dim\mathfrak{g}_\gamma}
\end{equation*}
so that $\langle\alpha,\alpha\rangle>0$.
This shows that $\langle\cdot,\cdot\rangle$ is a real inner product of $\mathfrak{t}_0$.
Let us note that as a direct consequence of \eqref{realrel}, we deduce that every root $\alpha\in\Delta$ is real-valued when restricted to $\mathfrak{t}_0$.

For every $\alpha\in\Delta$, let $s_\alpha$ be the reflection of $\mathfrak{t}^*$ given by
\begin{equation*}
	s_\alpha(\beta):=\beta-2\frac{\langle \beta,\alpha\rangle}{\langle \alpha,\alpha\rangle}\alpha,\quad \beta\in\mathfrak{t}^*.
\end{equation*}
We say that the group $W_\mathfrak{g}$ generated by all reflections $s_\alpha$, $\alpha\in\Delta$, is \textit{the Weyl group of $\mathfrak{g}$}. For more details concerning Weyl groups of graded Lie algebras, see \cite{vinberg77}.
One can show that the Weyl group $W_\mathfrak{g}$ permutes the elements of $\Delta$. Namely, let $\alpha,\beta\in\Delta$ and consider the representation $M_2$ of $\mathfrak{s}_\alpha^a\cong \mathfrak{sl}(2,\mathbb{C})$ and the integers $p_\beta,q_\beta$ defined above. Then, the representation theory of $\mathfrak{s}_\alpha^a\cong\mathfrak{sl}(2,\mathbb{C})$ ensures that for every $n\in{-p_\beta,\ldots,q_\beta}$ we have
$\beta+n\alpha\in\Delta$ so that 
\begin{equation*}
	s_\alpha(\beta)=\beta-2\frac{\langle\beta,\alpha\rangle}{\langle\alpha,\alpha\rangle}\alpha =\beta+(q_\beta-p_\beta)\alpha\in\Delta.
\end{equation*}

The above discussion is summarized in the following corollary.

\begin{corollary}\label{abstractrootsystem}
	The set $\Delta$ is an abstract root system \cite[Section II.5]{knappgreen} in the real inner product form $\mathfrak{t}_0^*$ of $\mathfrak{t}^*$.
\end{corollary}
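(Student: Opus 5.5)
We verify the axioms of an abstract root system in the sense of \cite[Section II.5]{knappgreen}: $\Delta$ is a finite set of nonzero vectors spanning the real inner product space $V=\mathfrak{t}_0^*$, it is stable under every reflection $s_\alpha$, $\alpha\in\Delta$, and the numbers $2\langle\beta,\alpha\rangle/\langle\alpha,\alpha\rangle$ are integers for all $\alpha,\beta\in\Delta$. Each axiom follows from the discussion preceding the statement, so the plan is to organize those facts rather than prove anything new.

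First we set up the real space. Restricted to $\mathfrak{t}_0$, every root is real-valued, by \eqref{realrel} and the definition $\mathfrak{t}_0=\sum_{\alpha}\mathbb{R}H_\alpha$. Since $\langle\cdot,\cdot\rangle$ is positive definite on $\mathfrak{t}_0$, we may identify $\mathfrak{t}_0^*$ with $\mathfrak{t}_0$ via $\alpha\mapsto H_\alpha$ and give it the transported inner product. Finiteness of $\Delta$ is immediate from $\dim\mathfrak{g}<\infty$ and \eqref{triangular}, and $0\notin\Delta$ by definition.

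Next we check spanning. Suppose the roots do not span $\mathfrak{t}_0^*$. Then some nonzero $H\in\mathfrak{t}_0$ satisfies $\alpha(H)=0$ for all $\alpha\in\Delta$, so $H$ is orthogonal to every $H_\alpha$ and hence to $\mathfrak{t}_0$. By the argument quoted from \cite[Corollary 2.38]{knappgreen}, $H$ then commutes with $\mathfrak{t}$ and every $\mathfrak{g}_\alpha$. Since $\mathfrak{t}_0$ is a real form of $\mathfrak{t}$ with a nondegenerate form, $H$ is in fact $0$. This is the one step that needs more care than the others. The concern is that $H$ must also commute with $\mathfrak{g}_0^b$ for $b\neq(0,0)$. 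The fix is to note that $\mathbb{C}H$ then spans a graded ideal with trivial bracket, or equivalently that $K(H,\mathfrak{t})=0$, which forces $H=0$ by nondegeneracy of $K|_{\mathfrak{t}}$ from Lemma \ref{pairing}. I would argue via the Killing form: $K(H,H')=\sum_\gamma\gamma(H)\gamma(H')\dim\mathfrak{g}_\gamma=0$ for all $H'\in\mathfrak{t}$, since $\mathfrak{g}_0$ contributes nothing because $\mathrm{ad}\,\mathfrak{t}$ is nilpotent on it. Hence $H=0$.

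Integrality follows from \eqref{realrel}: $2\langle\beta,\alpha\rangle/\langle\alpha,\alpha\rangle=p_\beta-q_\beta\in\mathbb{Z}$. This uses the $\mathfrak{sl}(2,\mathbb{C})$-module $M_2$ over $\mathfrak{s}_\alpha^a$ from Lemma \ref{sltriple}, where $a$ is any grade with $\mathfrak{g}_\alpha^a\neq0$. Finally, stability under reflections is the computation $s_\alpha(\beta)=\beta+(q_\beta-p_\beta)\alpha$. This lies in the unbroken string $\beta-p_\beta\alpha,\dots,\beta+q_\beta\alpha$, whose unbrokenness comes from $\mathfrak{sl}_2$-theory. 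If $\beta\neq\pm\alpha$, then $s_\alpha(\beta)\neq0$, so $s_\alpha(\beta)\in\Delta$. If $\beta=\pm\alpha$, then $s_\alpha(\beta)=\mp\alpha\in\Delta$ by Lemma \ref{pairing}.

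The main obstacle is the spanning axiom, because the extra components $\mathfrak{g}_0^b$ do not occur in the classical theory. Handling it through the trace formula for $K$ on $\mathfrak{t}$ avoids the difficulty. Reducedness is not required for an abstract root system in Knapp's sense, so no further work is needed, although the remark after Lemma \ref{onedim} already gives it.
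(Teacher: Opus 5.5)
Your proposal is correct and is essentially the paper's argument: realness and integrality come from \eqref{realrel}, stability under $s_\alpha$ comes from the unbroken $\alpha$-strings in the $\mathfrak{s}_\alpha^a$-module $M_2$, and spanning comes from nondegeneracy of $K$ on $\mathfrak{t}$. Your trace argument ($\alpha(H)=0$ for all $\alpha\in\Delta$ makes $\mathrm{ad}H$ nilpotent on $\mathfrak{g}$, so $K(H,\mathfrak{t})=0$) is the one the paper uses at the end of Lemma~\ref{onedim}, and it is more careful than the paper's ``$H$ is central'' remark, which, like your ``graded ideal'' alternative, glosses over $\mathfrak{g}_0^b$; it is most useful applied to $H\in\mathfrak{t}$, where it shows $\mathfrak{t}_0$ really is a real form of $\mathfrak{t}$, since for $H\in\mathfrak{t}_0$ positive definiteness already forces $H=0$.
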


Hence, we can use all the machinery (positive systems, simple roots, Weyl chambers, transitivity of Weyl group on Weyl chambers, etc.) coming from the theory of abstract root systems (e.g., \cite[Sections II.5-6]{knappgreen}). 


Let us say a few words about the case when 
 $\mathfrak{t}$ is self-centralizing in $\mathfrak{g}$, i.e., the centralizer 
\begin{equation*}
	\mathfrak{z}_\mathfrak{g}(\mathfrak{t})\coloneqq \{x\in\mathfrak{g}\mid [x,\mathfrak{t}]=\{0\}\}
\end{equation*}
of $\mathfrak{t}$ in $\mathfrak{g}$ coincides with $\mathfrak{t}$. This is the case, for instance, for the classical $\mathbb{Z}_2\times\mathbb{Z}_2$-graded Lie algebras $\mathfrak{sl}_{p,q,r,s}(n)$, $\mathfrak{sp}_p(n)$ and $\mathfrak{so}_p(n)$ of \cite{stoil,stoil25}.

\begin{lemma}\label{scentr}
	 If the Cartan subalgebra $\mathfrak{t}$ of $\mathfrak{g}$ is self-centralizing then 
	 \begin{equation*}
	 	\mathfrak{g}_0\coloneqq\{x\in\mathfrak{g}\vert (\mathrm{ad}h)^nx=0, \forall h\in\mathfrak{t} \hspace{1mm}\text{ for some } n=n(h,x)>0\}
	 	\end{equation*}
	coincides with $\mathfrak{t}$.
\end{lemma}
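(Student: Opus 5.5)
We must show that $\mathfrak{g}_0=\mathfrak{t}$ when $\mathfrak{z}_\mathfrak{g}(\mathfrak{t})=\mathfrak{t}$. The plan is to prove that $\mathfrak{t}$ acts semisimply (diagonalizably) on $\mathfrak{g}$. Then the generalized $0$-weight space $\mathfrak{g}_0$ coincides with the honest $0$-weight space, which is precisely $\mathfrak{z}_\mathfrak{g}(\mathfrak{t})=\mathfrak{t}$.

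First, $\mathfrak{g}_0$ is graded. Each $\mathrm{ad}h$, $h\in\mathfrak{t}\subseteq\mathfrak{g}^{(0,0)}$, preserves every $\mathfrak{g}^a$, so $\mathfrak{g}_0=\bigoplus_a\mathfrak{g}_0^a$. Moreover $\mathfrak{g}_0^{(0,0)}=\mathfrak{t}$, since $\mathfrak{t}$ is a Cartan subalgebra of the reductive Lie algebra $\mathfrak{g}^{(0,0)}$. Indeed, $\mathfrak{t}$ is self-normalizing nilpotent, hence equals the generalized zero weight space of $\mathfrak{g}^{(0,0)}$. It therefore suffices to show $\mathfrak{g}_0^a=\{0\}$ for each $a\neq(0,0)$.

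Fix $a\neq(0,0)$. The key point is that $\mathfrak{g}^a$ is a finite-dimensional representation of the reductive Lie algebra $\mathfrak{g}^{(0,0)}$, and we want the center $\mathfrak{z}$ of $\mathfrak{g}^{(0,0)}$ to act semisimply on it. The semisimple part $[\mathfrak{g}^{(0,0)},\mathfrak{g}^{(0,0)}]$ acts compatibly with Jordan decomposition, so elements of its Cartan subalgebra act semisimply on any finite-dimensional representation. For the center I would argue via the Killing form. Take $z\in\mathfrak{z}$ and write $\mathrm{ad}z=S+N$ (Jordan decomposition on $\mathfrak{g}$). Since $\mathrm{ad}z$ is a graded derivation of degree $(0,0)$, its semisimple part $S$ is also a degree-$(0,0)$ derivation. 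This follows by the standard argument: $S$ acts on generalized eigenspaces by scalars, and $[\mathfrak{g}_\lambda(z),\mathfrak{g}_\mu(z)]\subseteq\mathfrak{g}_{\lambda+\mu}(z)$. Hence $N$ is a nilpotent degree-$(0,0)$ derivation commuting with $\mathrm{ad}\,\mathfrak{t}$. I would then use $\mathrm{tr}(N\circ\mathrm{ad}y)=0$ for all $y$ to conclude, via nondegeneracy of $K$, that $N$ is trivial. To make this work, I would show that $N$ is inner on the relevant part, or use the fact that $\mathrm{ad}z$ commutes with $\mathrm{ad}\,\mathfrak{g}^{(0,0)}$.

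The alternative route, which I would try first, is simpler. Suppose $x\in\mathfrak{g}_0^a$ is nonzero. By Lie's theorem applied to the solvable (abelian) algebra $\mathfrak{t}$ acting on $\mathfrak{g}_0^a$, there is a nonzero $x_0\in\mathfrak{g}_0^a$ with $[h,x_0]=\lambda(h)x_0$ for all $h\in\mathfrak{t}$. Since $\mathfrak{t}$ acts nilpotently on $\mathfrak{g}_0$, we get $\lambda=0$. Hence $x_0\in\mathfrak{z}_\mathfrak{g}(\mathfrak{t})=\mathfrak{t}\subseteq\mathfrak{g}^{(0,0)}$. But $x_0\in\mathfrak{g}^a$ with $a\neq(0,0)$, so $x_0=0$, a contradiction. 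Therefore $\mathfrak{g}_0^a=\{0\}$ and $\mathfrak{g}_0=\mathfrak{t}$.

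The only delicate point is the existence of a simultaneous eigenvector within $\mathfrak{g}_0^a$ itself. This requires $\mathfrak{g}_0^a$ to be $\mathfrak{t}$-stable, which holds because $[\mathfrak{t},\mathfrak{g}_0]\subseteq\mathfrak{g}_0$ and $\mathfrak{t}$ preserves the grading. Lie's theorem then applies, exactly as it was used for \eqref{bracketaki}. So this short argument suffices and the Jordan-decomposition detour is unnecessary.
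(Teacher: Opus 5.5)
Your final argument is correct and is essentially the paper's proof. You reduce to homogeneous components, use $\mathfrak{g}_0^{(0,0)}=\mathfrak{t}$ because $\mathfrak{t}$ is a Cartan subalgebra of $\mathfrak{g}^{(0,0)}$, and for $a\neq(0,0)$ take a nonzero common null vector of $\mathrm{ad}\,\mathfrak{t}$ in $\mathfrak{g}_0^a$; it would lie in $\mathfrak{z}_\mathfrak{g}(\mathfrak{t})=\mathfrak{t}\subseteq\mathfrak{g}^{(0,0)}$, a contradiction. The Jordan-decomposition sketch in your third paragraph is unfinished, but you discard it, so it does not affect the proof.
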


\begin{proof} Of course, $\mathfrak{t}\subseteq \mathfrak{g}_0$. For the other inclusion, assume that $X\in\mathfrak{g}_0$ and decompose $X$ into a linear sum of homogeneous elements
	\begin{equation*}
		X=X^{(0,0)}+X^{(0,1)}+X^{(1,0)}+X^{(1,1)}. 
	\end{equation*}
Then $X^a\in\mathfrak{g}_0$ for every $a\in\mathbb{Z}_2\times\mathbb{Z}_2$. Hence, without loss of generality, we may assume that $X$ is homogeneous, say of degree $b$. Then there is an element $X'\in\mathfrak{g}_0^b$ such that $X'\in\mathfrak{z}_\mathfrak{g}(\mathfrak{t})$. Since $\mathfrak{z}_\mathfrak{g}(\mathfrak{t})=\mathfrak{t}\subseteq\mathfrak{g}^{(0,0)}$, we deduce that $X'\in\mathfrak{g}^{(0,0)}$ and $b=(0,0)$. Therefore $X\in\mathfrak{g}_0^{(0,0)}=\mathfrak{t}$ and we conclude that $\mathfrak{g}_0\subseteq\mathfrak{t}$.
\end{proof}

\begin{remark} As a consequence of Lemma \ref{scentr}, if $\mathfrak{t}$ is self-centralizing, then $\mathfrak{g}$ is a split $\mathbb{Z}_2\times\mathbb{Z}_2$-graded Lie algebra in the sense of \cite{rootcolor}.
\end{remark}

\begin{lemma}\label{oned}
	If the Cartan subalgebra $\mathfrak{t}$ is self-centralizing, we have
	\begin{equation*}
		\mathrm{dim}\mathfrak{g}_\alpha= 1
	\end{equation*}
	for every $\alpha\in\Delta$.
	Moreover, if $\alpha\in\Delta$, then $n\alpha\in\Delta$ if and only if $n=\pm1$.
\end{lemma}

\begin{proof}
	The proof is similar to the proof of Lemma \ref{onedim} or \cite[Proposition 2.21]{knappgreen}. More precisely, the Lie algebra $\mathfrak{s}_\alpha^a\cong\mathfrak{sl}(2,\mathbb{C})$ acts on the space
	\begin{equation*}
		M_3\coloneqq\mathbb{C}E_{\alpha}^a\oplus\mathfrak{t}\oplus\bigoplus\limits_{n\geq 1}\mathfrak{g}_{-n\alpha}.
	\end{equation*}
The fact that the elements $E_{-\alpha}^a$ and $H_\alpha$ of $\mathfrak{s}_\alpha^a$ preserves $M_3$ is clear.
The only point that deserves some attention is the fact that the action of the element $E_{\alpha}^a\in\mathfrak{g}_\alpha^a$ maps $\mathfrak{g}_{-\alpha}$ into $M_3$. We have $[E_\alpha^a,\mathfrak{g}_{-\alpha}]\subseteq \mathfrak{g}_0$ while Lemma \ref{scentr} ensures that $\mathfrak{g}_0=\mathfrak{t}$ so that $[E_\alpha^a,\mathfrak{g}_{-\alpha}]\subseteq \mathfrak{t}\subset M_3$.

The element $H_\alpha$ acts by a traceless operator on $M_3$ so that
\begin{equation*}
	0=\alpha(H_\alpha)+0-\sum\limits_{n>0}n\alpha(H_\alpha)\mathrm{dim}\mathfrak{g}_{-n\alpha}
\end{equation*}
and we deduce that $\mathrm{dim}\mathfrak{g}_{-\alpha}=1$ while $\mathrm{dim}\mathfrak{g}_{-n\alpha}=0$ for every $n\geq2$.
\end{proof}

For later use, we include here the following lemma.

\begin{lemma}\label{trivialaction} The $1$-dimensional trivial representation of $\mathfrak{g}$ is the unique $1$-dimensional representation of $\mathfrak{g}$. 
\end{lemma}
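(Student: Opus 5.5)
Let $\pi:\mathfrak{g}\to\mathfrak{gl}(V)$ be a representation with $\dim V=1$. I will show that $\pi$ vanishes on $\mathfrak{g}$, using one $\mathfrak{sl}_2$-triplet from Lemma \ref{sltriple} and then simplicity.

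First I fix what a $1$-dimensional representation means in this graded setting. $V$ is a $\mathbb{Z}_2\times\mathbb{Z}_2$-graded space concentrated in a single degree $c$, and $\pi$ is a degree-preserving homomorphism into $\mathfrak{gl}(V)$. Since $\mathrm{End}^a(V)=0$ for $a\neq(0,0)$, the map $\pi$ kills $\bigoplus_{a\neq(0,0)}\mathfrak{g}^a$. Its image lies in $\mathrm{End}^{(0,0)}(V)\cong\mathbb{C}$, which is abelian. Hence $\pi$ vanishes on $[\mathfrak{g},\mathfrak{g}]$.

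Next I show that $\pi$ is zero on every $h_\alpha$. Pick $\alpha\in\Delta$ and $a$ with $\mathfrak{g}_\alpha^a\neq0$; such a pair exists because $\Delta$ is nonempty. Indeed, if $\Delta$ were empty, the decomposition \eqref{triangular} together with Lemma \ref{nonde}-type arguments would force $\mathfrak{t}$ to be central, contradicting simplicity. By Lemma \ref{sltriple}, $h_\alpha=[x_\alpha^a,y_\alpha^a]$. If $a\neq(0,0)$, then $\pi(x_\alpha^a)=0$ already. In either case $\pi(h_\alpha)=[\pi(x_\alpha^a),\pi(y_\alpha^a)]=0$, since the bracket in the $1$-dimensional $\mathfrak{gl}(V)$ of degree-$(0,0)$ elements is the commutator of scalars. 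Alternatively, $\pi(h_\alpha)$ is the trace of a commutator. So $\pi$ vanishes on every $H_\alpha$, and hence on $\mathfrak{t}_0$ and on its complex span $\mathfrak{t}$.

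Finally I conclude by simplicity. The kernel $\ker\pi$ is a graded ideal of $\mathfrak{g}$, since $\pi$ is a graded homomorphism, and it is nonzero because it contains $\bigoplus_{a\neq(0,0)}\mathfrak{g}^a\neq0$ by the standing assumption $\mathfrak{g}\neq\mathfrak{g}^{(0,0)}$. By simplicity $\ker\pi=\mathfrak{g}$, so $\pi$ is trivial. This last step in fact gives the whole result on its own; the $\mathfrak{sl}_2$ step is only a consistency check.

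The main obstacle is the convention step. If "representation" also allowed odd-degree operators, a $1$-dimensional $V$ still has only degree-$(0,0)$ endomorphisms, so nothing changes. If instead one allowed ungraded representations, I would argue directly. The element $\pi(\mathfrak{g})$ is abelian, and each homogeneous $x\in\mathfrak{g}^a$ with $a\neq(0,0)$ lies in $[\mathfrak{g}^{(0,0)},\mathfrak{g}^a]+[\mathfrak{g}^{b},\mathfrak{g}^{a+b}]$-type brackets whose images commute. That covers $\mathfrak{g}=[\mathfrak{g},\mathfrak{g}]$, which holds by simplicity, since $[\mathfrak{g},\mathfrak{g}]$ is a nonzero graded ideal. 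I would also need to control graded-commutator signs $(-1)^{ab}$ there, for instance by checking that $\pi(x)\pi(y)\pm\pi(y)\pi(x)$ vanishes for scalars when the sign is $+$. I will fix the graded convention, so this complication is avoided.
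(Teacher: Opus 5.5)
\textbf{There is a genuine gap: your proof covers only graded modules, and the paper's lemma is about arbitrary ones.}

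You prove the lemma only for $\mathbb{Z}_2\times\mathbb{Z}_2$-graded one-dimensional representations, and that is not the statement the paper needs. In this paper ``representation'' means a not necessarily graded module (see the remark after Theorem \ref{hwt}). The lemma is invoked for such modules, e.g.\ for $U$ and $V/U$ in the codimension-one splitting result for arbitrary $V$, from which the graded version is then deduced.

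For graded $V$ your argument is correct, and it needs neither roots nor the self-centralizing hypothesis. But for an ungraded one-dimensional module $\pi(x)=\chi(x)$, the only constraint is
\[
\chi([x,y])=\bigl(1-(-1)^{ab}\bigr)\chi(x)\chi(y),\qquad x\in\mathfrak{g}^a,\ y\in\mathfrak{g}^b .
\]
Nothing forces $\chi(\mathfrak{g}^a)=0$ for $a\neq(0,0)$. When $a,b$ are distinct nonzero degrees, the right-hand side is $2\chi(x)\chi(y)$, not $0$. So ``the image is abelian, hence $\pi$ kills $[\mathfrak{g},\mathfrak{g}]=\mathfrak{g}$'' is false, and your proposed sign check fails exactly in the case that matters.

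This is not a technicality. Take $\mathfrak{g}=\mathbb{C}e_1\oplus\mathbb{C}e_2\oplus\mathbb{C}e_3$, with $e_1,e_2,e_3$ of degrees $(0,1),(1,0),(1,1)$ and $[e_i,e_j]=[e_j,e_i]=e_k$ for $\{i,j,k\}=\{1,2,3\}$. It is simple, has nondegenerate Killing form ($K(e_i,e_i)=2$), and has $\mathfrak{g}^{(0,0)}=0$. Yet $e_i\mapsto\tfrac12$ is a nontrivial one-dimensional module. This example also refutes your parenthetical claim that $\Delta\neq\emptyset$ follows from simplicity: here $\mathfrak{t}=0$.

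\textbf{How to repair it.} The $\mathfrak{sl}_2$ step is not a consistency check; it is the essential input. It requires roots, which the self-centralizing hypothesis supplies: if $\Delta=\emptyset$, then $\mathfrak{g}=\mathfrak{g}_0=\mathfrak{t}\subseteq\mathfrak{g}^{(0,0)}$, which is excluded. The key fact is that $a\cdot a=0$ in $\mathbb{Z}_2$, so every sign on $\mathfrak{g}^{(0,0)}\oplus\mathfrak{g}^a$ is $+1$. Hence any module, graded or not, restricts to an ordinary Lie algebra module of $\mathfrak{s}_\alpha^a\cong\mathfrak{sl}(2,\mathbb{C})$. A one-dimensional $\mathfrak{sl}_2$-module is trivial, so $\chi$ kills $h_\alpha$, $x_\alpha^a$ and $y_\alpha^a$.

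The paper finishes by noting that these elements span $\mathfrak{g}=\mathfrak{t}\oplus\bigoplus_\alpha\mathfrak{g}_\alpha$. If you prefer to keep your simplicity argument, apply it not to $\ker\pi$ but to its homogeneous part $\bigoplus_a(\ker\chi\cap\mathfrak{g}^a)$. The displayed identity shows this is a graded ideal. It contains $h_\alpha\neq0$, so it equals $\mathfrak{g}$.
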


\begin{proof}
	Let $V$ be a $1$-dimensional representation of $\mathfrak{g}$. From the representation theory of $\mathfrak{sl}(2,\mathbb{C})$, we know that every subalgebra $\mathfrak{s}_\alpha^a$ of $\mathfrak{g}$ acts trivially on $V$. On the other hand, the various subalgebras $\mathfrak{s}_\alpha^a$ generate $\mathfrak{g}$. We conclude that $\mathfrak{g}$ acts trivially on $V$.
\end{proof}

\section{Applications to finite-dimensional representations}

Let $\mathfrak{g}$ be a basic simple $\mathbb{Z}_2\times\mathbb{Z}_2$-graded Lie algebra with Cartan subalgebra $\mathfrak{t}$ and we assume that $\mathfrak{t}$ is self-centralizing in $\mathfrak{g}$.
In this section, based on the root theory developed in Section \ref{SEC2}, we provide a classification for finite-dimensional representations of $\mathfrak{g}$. More precisely, we prove a highest weight theorem for the finite-dimensional irreducible representations of $\mathfrak{g}$ (Subsection \ref{subsectionhwt}) while we show that every finite-dimensional representation of $\mathfrak{g}$ admits a direct sum decomposition into irreducible subrepresentations (Subsection \ref{weylscd}). Moreover, we show that there is an equivalence of categories between the category of finite-dimensional representations of $\mathfrak{g}$ and the category of finite-dimensional $\mathbb{Z}_2\times\mathbb{Z}_2$-graded representations of $\mathfrak{g}$.

We keep the same notation as in the previous sections. Namely, $\mathfrak{g}$ stands for a basic simple $\mathbb{Z}_2\times\mathbb{Z}_2$-graded Lie algebra with self-centralizing Cartan subalgebra $\mathfrak{t}$. We denote by $\Delta$ the root system of $\mathfrak{g}$ with respect to $\mathfrak{t}$ and by $\Delta^+$ a fixed positive root set in $\Delta$. For later use, set
\begin{align*}
	\mathfrak{n}&\coloneqq \bigoplus_{\alpha\in\Delta^+}\mathfrak{g}_\alpha,\\
	\mathfrak{n}^-&\coloneqq \bigoplus_{\alpha\in\Delta^+}\mathfrak{g}_{-\alpha}.
\end{align*}

\subsection{Highest Weight Theorem}\label{subsectionhwt}

In accordance with the terminology utilized in the theory of complex semisimple Lie algebras,
we say that an element $\lambda\in\mathfrak{t}^*$ 
is \textit{integral} if 
\begin{equation*}
	2\frac{\langle \lambda,\alpha\rangle}{\langle\alpha,\alpha\rangle}\in\mathbb{Z}, \quad \forall \alpha\in\Delta.
\end{equation*}
The element $\lambda$ is said to be \textit{dominant} if 
\begin{equation*}
	2\frac{\langle \lambda,\alpha\rangle}{\langle\alpha,\alpha\rangle}\geq0, \quad \forall \alpha\in\Delta^+.
\end{equation*}

\begin{theorem}\label{hwt}
Let $\mathfrak{g}$ be a basic simple $\mathbb{Z}_2\times\mathbb{Z}_2$-graded Lie algebra with self-centralizing Cartan subalgebra $\mathfrak{t}$. The set of equivalence classes of irreducible finite-dimensional representations of $\mathfrak{g}$ is parametrized 
by the dominant integral elements of $\mathfrak{t}^*$.
\end{theorem}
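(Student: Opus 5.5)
We follow the classical Cartan–Weyl argument (as in \cite[Chapter V]{knappgreen}), with the $\mathfrak{sl}_2$-subalgebras $\mathfrak{s}_\alpha^a$ of Lemma \ref{sltriple} in place of the usual root $\mathfrak{sl}_2$'s. Here a representation means an ungraded one: a module $V$ with a map $\pi:\mathfrak{g}\to\mathrm{End}(V)$ satisfying $\pi([x,y])=\pi(x)\pi(y)-(-1)^{ab}\pi(y)\pi(x)$. The plan is to show that the assignment $V\mapsto$ (highest weight of $V$) is well defined, injective, and has image exactly the dominant integral weights.

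\textbf{Step 1: weights.} Let $V$ be finite-dimensional irreducible. Each $h_\alpha$ lies in some $\mathfrak{s}_\alpha^a\cong\mathfrak{sl}(2,\mathbb{C})$, so it acts diagonalizably with integer eigenvalues. Since $\mathfrak{t}=\sum\mathbb{C}h_\alpha$ (because $\mathfrak{t}_0$ is a real form of $\mathfrak{t}$) is abelian, $\mathfrak{t}$ acts diagonalizably, and every weight $\mu$ satisfies $\mu(h_\alpha)=2\langle\mu,\alpha\rangle/\langle\alpha,\alpha\rangle\in\mathbb{Z}$; that is, all weights are integral. Moreover $\mathfrak{g}_\alpha V_\mu\subseteq V_{\mu+\alpha}$. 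Order weights using $\Delta^+$ and pick a maximal weight $\lambda$. Then $\mathfrak{n}V_\lambda=0$. Applying $\mathfrak{sl}_2$-theory to $\mathfrak{s}_\alpha^a$ for $\alpha\in\Delta^+$, a highest weight vector has $\lambda(h_\alpha)\ge0$, so $\lambda$ is dominant.

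\textbf{Step 2: uniqueness.} We need a triangular decomposition $\mathfrak{g}=\mathfrak{n}^-\oplus\mathfrak{t}\oplus\mathfrak{n}$. This follows from Lemma \ref{scentr} ($\mathfrak{g}_0=\mathfrak{t}$) and Corollary \ref{split}. We also need a PBW-type spanning statement for the enveloping algebra $U(\mathfrak{g})$, which we define as the tensor algebra modulo $x\otimes y-(-1)^{ab}y\otimes x-[x,y]$ for homogeneous $x,y$. The straightening relations show that $U(\mathfrak{g})=U(\mathfrak{n}^-)U(\mathfrak{t})U(\mathfrak{n})$, where each factor is spanned by ordered monomials in homogeneous root vectors; only spanning is needed. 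Hence $V=U(\mathfrak{n}^-)v$ for any nonzero $v\in V_\lambda$, and so $\dim V_\lambda=1$. If $V,V'$ share the highest weight $\lambda$, then the diagonal submodule of $V\oplus V'$ generated by $(v,v')$ has one-dimensional $\lambda$-weight space. Its projections to $V$ and $V'$ are therefore isomorphisms, and $V\cong V'$.

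\textbf{Step 3: existence, the main obstacle.} For dominant integral $\lambda$, form the Verma module $M(\lambda)=U(\mathfrak{g})\otimes_{U(\mathfrak{t}\oplus\mathfrak{n})}\mathbb{C}_\lambda$. Well-definedness requires that $\mathfrak{t}\oplus\mathfrak{n}$ be a graded subalgebra and that $\mathbb{C}_\lambda$ be a module for it. The latter holds because $[\mathfrak{n},\mathfrak{n}]\subseteq\mathfrak{n}$, $[\mathfrak{t},\mathfrak{n}]\subseteq\mathfrak{n}$, and $\lambda$ vanishes on $[\mathfrak{t}\oplus\mathfrak{n},\mathfrak{t}\oplus\mathfrak{n}]\cap\mathfrak{t}$. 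We then need $M(\lambda)\neq0$ with one-dimensional $\lambda$-weight space. For this one needs a genuine PBW theorem for color Lie algebras; this is Scheunert's theorem, or it can be obtained by Scheunert's cocycle twist, which turns $\mathfrak{g}$ into a Lie superalgebra. Take $L(\lambda)$, the unique irreducible quotient. The hard part is finite-dimensionality. The classical route is as follows. For each simple root $\alpha$, choose $a$ with $\mathfrak{g}_\alpha^a\neq0$ (Lemma \ref{oned} says $\mathfrak{g}_\alpha$ is one-dimensional, so $a$ is unique). By $\mathfrak{sl}_2$ computations, $(y_\alpha^a)^{\lambda(h_\alpha)+1}v_\lambda$ is a singular vector, so it vanishes in $L(\lambda)$. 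Then the root vectors act locally nilpotently on $L(\lambda)$: the span $\mathrm{ad}$-nilpotence holds because $\mathfrak{g}$ is finite-dimensional with finitely many weights. Consequently $L(\lambda)$ is a sum of finite-dimensional $\mathfrak{s}_\alpha^a$-modules for every simple $\alpha$. This gives invariance of its weight multiset under the simple reflections $s_\alpha$, and hence under $W_\mathfrak{g}$ by Corollary \ref{abstractrootsystem}. Since every weight is $W_\mathfrak{g}$-conjugate to a dominant weight $\le\lambda$, and there are only finitely many such dominant weights, the set of weights is finite. Since weight spaces are finite-dimensional (by PBW), $\dim L(\lambda)<\infty$. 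The delicate point to verify is the graded sign in the operators $\pi(x_\alpha^a)$. Because $\mathfrak{g}^{(0,0)}\oplus\mathfrak{g}^a$ is an ordinary Lie algebra, though, the $\mathfrak{s}_\alpha^a$-computations are unaffected, and local nilpotence of $\mathrm{ad}$ on $U(\mathfrak{g})$ holds with signs that do not matter.
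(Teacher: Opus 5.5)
Your proposal is correct and follows essentially the same route as the paper: Knapp's Chapter V argument run through the $\mathfrak{sl}_2$-copies $\mathfrak{s}_\alpha^a$, with PBW giving $\dim V_\lambda=1$, and existence obtained from the Verma module, the singular vector $(y_\alpha^a)^{\lambda(h_\alpha)+1}v_\lambda$, $W_\mathfrak{g}$-invariance of the weights, and finiteness of the dominant weights below $\lambda$. There are two small deviations: you make injectivity explicit via the diagonal submodule of $V\oplus V'$, which the paper leaves implicit, and you obtain local finiteness for $\mathfrak{s}_\alpha^a$ from local nilpotence of the graded $\mathrm{ad}$ on $U(\mathfrak{g})$ (valid because $(-1)^{aa}=1$), whereas the paper uses the sign-free observation that the sum of all finite-dimensional $\mathfrak{s}_\alpha^a$-submodules of $L(\lambda)$ is $\mathfrak{g}$-stable.
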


\begin{remark}
	We note that the finite-dimensional representations of $\mathfrak{g}$ that we consider in Theorem \ref{hwt} are not necessarily $\mathbb{Z}_2\times\mathbb{Z}_2$-graded. Nevertheless, we will see in Subsection \ref{weylscd} that every finite-dimensional representation of $\mathfrak{g}$ can be equipped with a $\mathbb{Z}_2\times\mathbb{Z}_2$-graded structure.
\end{remark}

The proof of Theorem \ref{hwt} is based on results from Section \ref{SEC2} and standard arguments utilized in the representation theory of complex simple Lie algebras (e.g., \cite[Sections V.2-3]{knappgreen}) that we shortly present in what follows.

Let $V $ be an irreducible finite-dimensional representation of $\mathfrak{g}$. According to Lemma \ref{oned}, for every $\alpha\in\Delta^+$, there is a unique $a\in\mathbb{Z}_2\times\mathbb{Z}_2$ such that $\mathfrak{g}_\alpha^a\neq\{0\}$, and hence, from Lemma \ref{pairing}, $\mathfrak{g}_{-\alpha}^a\neq\{0\}$. Fix the $\mathfrak{sl}_2$-triplet $\{h_\alpha,x_\alpha^a,y_\alpha^a\}$ (see Lemma \ref{sltriple}). From the representation theory of $\mathfrak{sl}(2,\mathbb{C})$, we deduce that every $h_\alpha$, $\alpha\in\Delta^+$, acts diagonally on $V$ so that $V$ is a weight-module in the sense that 
\begin{equation*}
	V=\bigoplus\limits_{\mu\in\mathfrak{t}^*}V_\mu
\end{equation*}
where
\begin{equation*}
	V_\mu:=\{v\in V\mid hv=\mu(h)v\text{ for all }h\in\mathfrak{t}\}
\end{equation*}
while every weight of $V$, i.e., every element $\mu\in\mathfrak{t}^*$ for which $V_\mu\neq0$, is 
integral. All the weights are real-valued when restricted to $\mathfrak{t}_0$ and, with respect to some lexicographic ordering (compatible with the choice of $\Delta^+$), there is a highest one, say $\lambda$, among them. The highest weight $\lambda$ of $V$ is dominant and all the weights of $V$ are of the form 
\begin{equation*}
	\lambda-\sum\limits_{\alpha\in\Delta^+}n_\alpha\alpha
\end{equation*}
for $n_\alpha$, $\alpha\in\Delta^+$, nonnegative integers, while the Weyl group $W_\mathfrak{g}$ of $\Delta$ permutes the weights of $V$.

It is worthwhile to mention that the universal enveloping algebra $U(\mathfrak{g})$ of $\mathfrak{g}$ admits a PBW basis \cite{scheunert,bautista}. The PBW theorem for color Lie algebras ensures that $U(\mathfrak{g})$ admits a triangular decomposition analogous to the classical case.
As a consequence, the highest weight subspace $V_\lambda$ of $M$ is $1$-dimensional. Indeed, if $v_\lambda$ is a highest weight vector of $V$, then we have
\begin{equation*}
	V=U(\mathfrak{g})v_\lambda=U(\mathfrak{n}^-)U(\mathfrak{t})U(\mathfrak{n})v_\lambda=U(\mathfrak{n}^-)v_\lambda
\end{equation*}
so that $V_\lambda$ is generated by the action of the degree zero elements of $U(\mathfrak{n}^-)$ on $v_\lambda$. Hence to every irreducible finite-dimensional representation of $\mathfrak{g}$ corresponds a dominant integral element of $\mathfrak{t}^*$, namely its highest weight.

For the inverse direction, based on Section \ref{SEC2}, one has to adapt the proof of \cite[Theorem 5.16]{knappgreen}.
Let $\lambda\in\mathfrak{t}^*$ be a dominant integral element and define the corresponding Verma module, i.e., the $\mathfrak{g}$-representation
\begin{equation*}
	M(\lambda)\coloneqq U(\mathfrak{g})\otimes_{U(\mathfrak{b})}\mathbb{C}_\lambda.
\end{equation*}
Here $\mathfrak{b}\coloneqq \mathfrak{t}\oplus\mathfrak{n}$ and $\mathbb{C}_\lambda$ is the $U(\mathfrak{b})$-representation where $\mathfrak{t}$ acts by $\lambda$ and $\mathfrak{n}$ acts trivially while $\mathfrak{g}$ acts on $M(\lambda)$ by left multiplication on the first compenent.
The Verma module $M(\lambda)$ contains a unique maximal proper $U(\mathfrak{g})$-submodule $N(\lambda)$ so that 
\begin{equation*}
	L(\lambda)\coloneqq M(\lambda)/N(\lambda)
\end{equation*}
is an irreducible highest weight $U(\mathfrak{g})$-module of highest weight $\lambda$. In particular, $N(\lambda)$ is the sum of all proper $U(\mathfrak{g})$-submodules of $M(\lambda)$. 

We are left to show that $L(\lambda)$ is finite-dimensional. Let $\alpha\in\mathfrak{t}^*$ be a simple root (with respect to $\Delta^+$) and set 
\begin{equation*}
	m\coloneqq 2\frac{\langle\lambda+\rho,\alpha\rangle}{\langle\alpha,\alpha\rangle}
\end{equation*}
where $\rho$ is the half-sum of positive roots of $\Delta$. The vector 
\begin{equation*}
	E_{-\alpha}^m(1\otimes1)
\end{equation*}
of $M(\lambda)$ is annihilated by the elements of $\mathfrak{n}$ (see \cite[Lemma 5.18]{knappgreen}) and, due to the PBW Theorem \cite{scheunert,bautista}, it generates a proper highest weight $\mathfrak{g}$-submodule in $M(\lambda)$. 
Then, if $v_\lambda$ is a nonzero highest weight vector of the $1$-dimensional highest weight subspace $L(\lambda)_\lambda$ of $L(\lambda)$, for all $n$ sufficiently large, we have $E_{-\alpha}^n v_\lambda=0$. According to Lemma \ref{oned}, there is a unique $a\in\mathbb{Z}_2\times\mathbb{Z}_2$ such that $\mathfrak{g}_{\alpha}^a$ and $\mathfrak{g}_{-\alpha}^a$ are nonzero.
Consider the standard $\mathfrak{sl}_2$-triplet $\{h_\alpha, x_\alpha^a,y_{\alpha}^a\}$ and let $\mathfrak{s}_\alpha^a$ be the corresponding Lie algebra, isomorphic to $\mathfrak{sl}(2,\mathbb{C})$. Let $U$ stand for the sum of all finite-dimensional $U(\mathfrak{s}_\alpha^a)$-submodules of $L(\lambda)$. Then $U$ is a nontrivial $\mathfrak{g}$-invariant subspace of $L(\lambda)$ so that $L(\lambda)=U$ \cite[p. 289]{knappgreen}. Let $\mu\in\mathfrak{t}^*$ be a weight of $L(\lambda)$ and $v_\mu\in L(\lambda)$ a nonzero weight vector for $\mu$. The representation theory of $\mathfrak{s}_\alpha^a\cong \mathfrak{sl}(2,\mathbb{C})$ ensures that, if $\langle\mu,\alpha\rangle>0$ then
 \begin{equation*}
 	(E_{-\alpha})^{\frac{2\langle\mu,\alpha\rangle}{\langle\alpha,\alpha\rangle}}v_\mu\neq0,
 \end{equation*} while if $\langle\mu,\alpha\rangle<0$ then  \begin{equation*}
 (E_\alpha)^{-\frac{2\langle\mu,\alpha\rangle}{\langle\alpha,\alpha\rangle}}v_\mu\neq0.
 \end{equation*}
  Hence, in both cases the element
\begin{equation*}
s_\alpha(\mu)=\mu-2\frac{\langle\mu,\alpha\rangle}{\langle\alpha,\alpha\rangle}
\end{equation*}
of $\mathfrak{t}^*$ is a weight of $L(\lambda)$. This shows that the Weyl group $W_\mathfrak{g}$ of $\Delta$ permutes the weights of $L(\lambda)$. Moreover, every weight of $L(\lambda)$ is $W_\mathfrak{g}$-conjugate to some dominant weight of $L(\lambda)$ and the dominant weights of $L(\lambda)$ are finitely many. We conlude that $L(\lambda)$ has finitely many weight subspaces while each of them must be finite-dimensional. Hence $L(\lambda)$ is an irreducible finite-dimensional representation of $\mathfrak{g}$ with highest weight $\lambda$. This concludes our discussion about Theorem \ref{hwt}.

\subsection{Theorem of complete reducibility}\label{weylscd}

The main goal of this subsection is to show the following theorem.

\begin{theorem}\label{complred}
	Let $\mathfrak{g}$ be a basic simple $\mathbb{Z}_2\times\mathbb{Z}_2$-graded Lie algebra with a self-centralizing Cartan subalgebra $\mathfrak{t}$, and $V$ a finite-dimensional representation of $\mathfrak{g}$. Then $V$ is completely reducible, i.e., there exist irreducible subrepresentations $V_1,\ldots,V_k$ of $V$ such that $V=V_1\oplus\ldots\oplus V_k$.
	\end{theorem}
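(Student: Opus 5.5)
We follow the classical proof of Weyl's theorem via a Casimir element, adapted to the $\mathbb{Z}_2\times\mathbb{Z}_2$-graded setting. A representation here means a homomorphism $\pi:\mathfrak{g}\to\mathfrak{gl}(V)$ of graded Lie algebras, so $\pi(x)\pi(y)-(-1)^{ab}\pi(y)\pi(x)=\pi([x,y])$ for homogeneous $x\in\mathfrak{g}^a,y\in\mathfrak{g}^b$. Equivalently, $V$ is a $U(\mathfrak{g})$-module.

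\textbf{Step 1: the Casimir element.} Since $K$ is nondegenerate and pairs $\mathfrak{g}^a$ with itself (Lemma \ref{pairing}), we choose for each $a$ a homogeneous basis $\{X_i\}$ of $\mathfrak{g}^a$ and a $K$-dual basis $\{X^i\}$ of $\mathfrak{g}^a$. We then set $\Omega=\sum_i X^iX_i\in U(\mathfrak{g})$. The invariance identity $K([x,y],z)+(-1)^{ab}K(y,[x,z])=0$ should give that $\Omega$ graded-commutes with $\mathfrak{g}$. Since $\Omega$ has degree $(0,0)$, this means it genuinely commutes with every $x\in\mathfrak{g}$. The verification is the usual expansion of $[x,\Omega]$, with the signs tracked carefully; the symmetry of $(a,b)\mapsto ab$ makes them cancel.

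\textbf{Step 2: the eigenvalue of $\Omega$.} On $L(\lambda)$, $\Omega$ acts by a scalar $c_\lambda$. Write $\Omega$ using an orthonormal basis of $\mathfrak{t}$ (via Lemma \ref{oned} and $\mathfrak{g}_0=\mathfrak{t}$), together with the pairs $E_{\alpha}^a,E_{-\alpha}^a$, and evaluate on the highest weight vector. The cross terms are rewritten using Lemma \ref{XEalpha}, which gives $[E_\alpha^a,E_{-\alpha}^a]=H_\alpha$ with a sign from antisymmetry. This yields $c_\lambda=\langle\lambda,\lambda+2\rho\rangle$. By the inner product structure of Corollary \ref{abstractrootsystem}, $c_\lambda>0$ for dominant $\lambda\neq0$, and $c_0=0$.

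\textbf{Step 3: reduction and splitting.} Let $W\subseteq V$ be a subrepresentation. We look for a complement, and as usual it suffices to treat the case $\operatorname{codim}W=1$. To reduce to it, pass to $\mathrm{Hom}(V,W)\supset$ the space of maps that are scalar on $W$. This requires $\mathrm{Hom}(V,W)$ to carry a $\mathfrak{g}$-action. Since $V$ need not be graded a priori, I would define it via $\mathfrak{gl}$-type brackets with the grading of $\mathfrak{g}$ only, or else first show that any finite-dimensional $V$ carries a compatible grading and use graded Hom with signs. For the codimension-one case, the quotient $V/W$ is trivial by Lemma \ref{trivialaction}. Induct on $\dim W$: if $W$ is reducible, quotient by a proper submodule. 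If $W$ is irreducible and nontrivial, then $\Omega$ acts on $W$ by $c_\lambda\neq0$ and by $0$ on $V/W$, so $\ker\Omega$ is a complement. If $W$ is trivial, then $\mathfrak{g}V\subseteq W$ and $\mathfrak{g}\mathfrak{g}V=0$. Because $\mathfrak{g}=[\mathfrak{g},\mathfrak{g}]$ by simplicity, $\mathfrak{g}$ acts by zero, so any complement works.

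\textbf{Main obstacle.} I expect the main difficulty to be the reduction to codimension one. The space $\mathrm{Hom}(V,W)$ must carry a genuine representation of the color Lie algebra, and for ungraded $V$ the naive commutator action fails the graded identities. My first attempt would be to prove directly that $V$ admits a $\mathbb{Z}_2\times\mathbb{Z}_2$-grading compatible with the action. One candidate is to grade each weight space by the degree class in the weight lattice, using that each root has a unique degree (Lemma \ref{oned}). With that grading in place, graded Hom with Koszul-type signs is available. Failing this, I would avoid Hom entirely: decompose $V$ into generalized eigenspaces of $\Omega$, which are subrepresentations since $\Omega$ is central, and then handle each eigenspace by a highest-weight argument on its $\mathfrak{t}$-weights.
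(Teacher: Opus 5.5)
\textbf{Gap: the grading on $V$ is only proposed, not constructed.} Your outline follows the paper's route. It uses the Casimir element, then the codimension-one splitting, then the $\mathrm{End}(V)$-trick with the graded action $\sigma(X)\gamma=\varphi(X)\gamma-(-1)^{|X||\gamma|}\gamma\varphi(X)$ once $V$ carries a compatible grading. Your handling of the trivial case via $\mathfrak{g}=[\mathfrak{g},\mathfrak{g}]$ is fine, and cleaner than the paper's.

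You correctly see that everything hinges on that grading. But it is exactly the step you leave as a ``candidate''. The reason you give for it, that each root has a unique degree by Lemma~\ref{oned}, does not suffice.

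To define the grading, split $V$ along cosets $\lambda_j+\mathbb{Z}\Delta$ of the root lattice, not the weight lattice. (Already for $\mathfrak{sl}(2,\mathbb{C})$ graded with $\mathfrak{g}_{\pm\alpha}\subseteq\mathfrak{g}^{(0,1)}$ there is no consistent degree for $\alpha/2$.) Fix a base weight $\lambda_j$ in each coset and put $|V_\mu|:=\deg(\mu-\lambda_j)$. Here $\deg:\mathbb{Z}\Delta\to\mathbb{Z}_2\times\mathbb{Z}_2$ is the homomorphism with $\deg(\alpha_i)=|\alpha_i|$ on simple roots.

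Compatibility with the action, $|\mathfrak{g}_\beta V_\mu|=|V_\mu|+|\beta|$, is then equivalent to $\deg(\beta)=|\beta|$ for every root. That is, you need $|\beta|=\sum_i m_i|\alpha_i|$ for $\beta=\sum_i m_i\alpha_i\in\Delta^+$. This additivity is the key lemma in the paper's proof, and it needs an argument:
\begin{enumerate}
\item Choose a simple $\alpha_j$ with $\langle\beta,\alpha_j\rangle>0$. This is possible since $0<\langle\beta,\beta\rangle=\sum_i m_i\langle\beta,\alpha_i\rangle$.
\item The $\alpha_j$-string through $\beta$ is an irreducible $\mathfrak{s}_{\alpha_j}$-module. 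Its $h_{\alpha_j}$-weight spaces are one-dimensional by Lemma~\ref{oned}, and its weights all have the same parity.
\item Hence $[\mathfrak{g}_{\alpha_j},\mathfrak{g}_{\beta-\alpha_j}]=\mathfrak{g}_\beta\neq0$, so $|\beta|=|\alpha_j|+|\beta-\alpha_j|$.
\item Induct on the height.
\end{enumerate}

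\textbf{A second missing observation: every subrepresentation is graded.} To run the graded $\mathrm{Hom}$-trick on an arbitrary subrepresentation $W$, you need $W$ to be graded. With the grading above this is automatic: $W$ is $\mathfrak{t}$-stable, so $W=\bigoplus_\mu(W\cap V_\mu)$, and each $V_\mu$ is homogeneous. Only then is $Z=\{\gamma\mid\gamma(V)\subseteq W,\ \gamma|_W \text{ scalar}\}$ a graded $\sigma$-submodule. The codimension-one lemma applied to $Z'\subset Z$ then gives $\gamma\in Z$ with $\sigma(\mathfrak{g})\gamma=0$ and $\gamma|_W$ a nonzero scalar. The degree-$(0,0)$ component of $\gamma$ commutes with $\varphi(\mathfrak{g})$, and its kernel is the desired complement.

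\textbf{On your fallback.} The route via generalized $\Omega$-eigenspaces could become a genuinely different, grading-free proof. It needs an ingredient you do not mention: for dominant integral $\nu<\lambda$ one has $\langle\nu+\rho,\nu+\rho\rangle<\langle\lambda+\rho,\lambda+\rho\rangle$. This inequality is what forces a lift of a highest weight vector of $L(\lambda)$ to be killed by $\mathfrak{n}$ and to generate a copy of $L(\lambda)$. As written, the fallback is only a slogan.
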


We begin by defining the Casimir element of $\mathfrak{g}$ which is needed for proving an intermediate result. Let $\{Z_i\}$ be an orthonormal basis of $\mathfrak{g}$ with respect to the Killing form $K$ of $\mathfrak{g}$. We call \textit{the Casimir element of $\mathfrak{g}$} the element 
\begin{equation*}
	\Omega\coloneqq \sum\limits_{i}Z_i^2
\end{equation*} 
of the universal enveloping algebra $U(\mathfrak{g})$ of $\mathfrak{g}$. 
One can easily show that the Casimir element $\Omega$ of $\mathfrak{g}$ does not depend on the choice of the orthonormal basis $\{Z_i\}$ while it is homogeneous of degree $(0,0)$.

\begin{proposition}\label{centralelement}
	The Casimir element $\Omega$ of $\mathfrak{g}$ is a central element of $U(\mathfrak{g})$.
\end{proposition}

\begin{proof}
		It suffices to show that, for every $X\in\mathfrak{g}$, we have
	\begin{equation*}
		[X,\Omega]=0
	\end{equation*}
	in $U(\mathfrak{g})$. Since $\Omega$ is homogeneous, without loss of generality, we may assume that $X$ is homogeneous, i.e., $X\in\mathfrak{g}^a$ for some $a\in\mathbb{Z}_2\times\mathbb{Z}_2$. 
	
	Let $\{Z_i\}$ be an orthonormal basis of $\mathfrak{g}$ with respect to the Killing form $K$. According to the discussion before Definition \ref{defafterdisc}, we may assume that every $Z_i$ is homogeneous, say of degree $d_i$. The Casimir element $\Omega$ is
	\begin{equation*}
		\Omega=\sum_iZ_i^2.
	\end{equation*} 
Assume that 
	\begin{equation*}
		[X,Z_i]=\sum_ja_{ij}Z_j.
	\end{equation*}
Then 
\begin{align*}
	a_{ji}&=K([X,Z_j],Z_i)\\
	&=-(-1)^{ d_ja}K(Z_j,[X,Z_i])\\
	&=-(-1)^{d_ja} a_{ij}
\end{align*}
so that
\begin{align*}
	[X,\Omega]&=\sum_i[X,Z_i^2]\\
	&=\sum_i\big( [X,Z_i]Z_i+(-1)^{d_i a}Z_i[X,Z_i]\big)\\
	&=\sum_{i,j} a_{ij}Z_jZ_i+\sum_{i,j}(-1)^{d_i a}a_{ij}Z_iZ_j\\
	&=\sum_{i,j} a_{ij}Z_jZ_i+\sum_{i,j}(-1)^{d_j a}a_{ji}Z_jZ_i\\
	&=\sum_{i,j} a_{ij}Z_jZ_i-\sum_{i,j}(-1)^{2d_j a }a_{ij}Z_jZ_i\\
	&=0.
\end{align*}
\end{proof}

	Notice that because $\Omega$ is a $(0,0)$-degree element of $U(\mathfrak{g})$, we get
\begin{equation*}
X\Omega-\Omega X=[X,\Omega]=0. 
\end{equation*}
As a consequence, if $V$ is a finite-dimensional irreducible representation of $\mathfrak{g}$, then, due to Schur's Lemma \cite[Corollary 5.2]{knappgreen}, $\Omega$ acts by some scalar on $V$. More precisely, if we assume that the highest weight of $V$ is $\lambda\in\mathfrak{t}^*$,  $\Omega$ acts by 
\begin{equation*}
	\langle\lambda ,\lambda+2\rho\rangle 
\end{equation*}
on $V$. Recall that 
\begin{equation*}
	\rho\coloneqq \dfrac{1}{2}\sum\limits_{\alpha\in\Delta^+}\alpha.
\end{equation*} 
Indeed, let $H_i$ be an orthonormal basis of the Cartan subalgebra $\mathfrak{t}$ of $\mathfrak{g}$ and, for every $\alpha\in \Delta^+$ and suitable $a\in\mathbb{Z}_2\times\mathbb{Z}_2$, let $E_\alpha^a\in\mathfrak{g}_\alpha^a$ and $E_{-\alpha}^a\in\mathfrak{g}_{-\alpha}^a$ such that $K(E_\alpha^a,E_{-\alpha}^a)=1$. Due to Lemmata \ref{pairing} and \ref{oned}, we can always find such elements. Set $H_\alpha\coloneqq [E_\alpha^a,E_{-\alpha}^a]$. Then, the Casimir element $\Omega$ of $\mathfrak{g}$ is
\begin{align*}
	\Omega&=\sum_i H_i^2+\sum\limits_{\alpha\in\Delta^+}E_{\alpha}^aE_{-\alpha}^a+\sum\limits_{\alpha\in\Delta^+}E_{-\alpha}^aE_{\alpha}^a\\
	&=\sum_i H_i^2+\sum\limits_{\alpha\in\Delta^+}H_\alpha+2\sum\limits_{\alpha\in\Delta^+}E_{-\alpha}^aE_{\alpha}^a.
\end{align*}
Let $v_\lambda$ be a nonzero highest weight vector of $V$. Then 
\begin{align*}
	\Omega v_\lambda&=\big(\sum\limits_i \lambda(H_i)^2+\sum\limits_{\alpha\in\Delta^+} \lambda(H_\alpha)\big)v_\lambda\\
	&=\big(\langle\lambda,\lambda\rangle+\langle \lambda,2\rho\rangle\big)v_\lambda\\
	&=\langle\lambda ,\lambda+2\rho\rangle v_\lambda.
	\end{align*}
 In particular, the Casimir eigenvalue distinguishes non-isomorphic irreducible representations.
Since $\lambda$ is dominant, we deduce that $\Omega$ acts trivially on $V$ if and only if $\lambda=0$. In this case, according to Lemma \ref{trivialaction}, $V$ is the trivial representation of $\mathfrak{g}$.

Now, we are ready to say a few words about the proof of Theorem \ref{complred}. The argument is based on the results obtained in Section \ref{SEC2} and follows arguments from \cite[Section V.4]{knappgreen}. 

We will need the following intermediate result: if $V$ is a finite-dimensional representation of $\mathfrak{g}$ which contains a $\mathfrak{g}$-invariant subspace $U$ of codimension $1$, then there exists a $1$-dimensional $\mathfrak{g}$-invariant subspace $W$ of $V$ such that $V=W\oplus U$. If $U$ is $1$-dimensional, then $\mathfrak{g}$ acts trivially both on $U$ and $V/U$ (see Lemma \ref{trivialaction}) and the result follows. If $\dim U>1$ and $U$ is irreducible, we use the Casimir element $\Omega$ defined above. Namely, $\Omega(V)\subseteq U$ and $\Omega$ acts on $U$ by some nonzero scalar, so that $\dim \ker \Omega=1$ and $\ker \Omega \cap U=\{0\}$. Since $\Omega$ is central and of degree $(0,0)$ in $U(\mathfrak{g})$, it commutes with the action of $\mathfrak{g}$ on $V$ so that $\ker \Omega $ is $\mathfrak{g}$-invariant. Then $V=U\oplus \ker \Omega$. For the case when $U$ is nonirreducible, we apply induction on $\dim V$. The base case $\dim V=2$ is treated above. If $\dim V>2$, let $U_1$ be a $\mathfrak{g}$-invariant subspace of $U$ and form the quotients $U/U_1$ and $V/U_1$. Since $U/U_1$ is of codimension $1$ in $V/U_1$, from the induction hypothesis, there exists some $\mathfrak{g}$-invariant subspace $Y$ of $V$ such that 
\begin{equation*}
	V/U_1=U/U_1\oplus Y/U_1
\end{equation*}
and $\dim Y/U_1=1$. Since $\dim Y<\dim V$, there must be a $1$-dimensional $\mathfrak{g}$-invariant subspace $W$ of $Y$ such that $Y=U_1\oplus W$. Then \begin{equation*}
W\cap U=(W\cap Y)\cap U=W\cap(Y\cap U)\subseteq W\cap U_1=\{0\}
\end{equation*} and so $V=W\oplus U$.
For more details, see \cite[Lemma 5.30]{knappgreen}. 

We recall that a representation $V$ of $\mathfrak{g}$ is said to be $\mathbb{Z}_2\times\mathbb{Z}_2$-graded
whenever it admits a vector space decomposition
\begin{equation*}
	V=\bigoplus\limits_{b\in\mathbb{Z}_2\times\mathbb{Z}_2}V^b
\end{equation*}
such that 
\begin{equation*}
	\mathfrak{g}^aV^b\subseteq V^{a+b}
\end{equation*}
for every $a,b\in\mathbb{Z}_2\times\mathbb{Z}_2$.
One checks that the above intermediate result holds in its $\mathbb{Z}_2\times\mathbb{Z}_2$-graded version as well. More precisely, if $V$ is a $\mathbb{Z}_2\times\mathbb{Z}_2$-graded representation of $\mathfrak{g}$ which contains a $\mathfrak{g}$-invariant $\mathbb{Z}_2\times\mathbb{Z}_2$-graded subspace $U$ of codimension $1$, then there exists a $1$-dimensional $\mathfrak{g}$-invariant $\mathbb{Z}_2\times\mathbb{Z}_2$-graded subspace $W$ such that $V=W\oplus U$. This is due to \cite[Corollary 2.3.4]{graded} which states that a $\mathbb{Z}_2\times\mathbb{Z}_2$-graded $\mathfrak{g}$-submodule $N$ of some $\mathbb{Z}_2\times\mathbb{Z}_2$-graded $\mathfrak{g}$-module $M$ is a direct summand of $M$ in the category of $\mathfrak{g}$-modules if and only if it is a direct summand of $M$ in the category of $\mathbb{Z}_2\times\mathbb{Z}_2$-graded $\mathfrak{g}$-modules.

Let us now prove Theorem \ref{complred} in the case where the representation $(\varphi,V)$ of $\mathfrak{g}$ is $\mathbb{Z}_2\times\mathbb{Z}_2$-graded.
Assume that $V$ is nonirreducible and
let $U$ be a nonzero $\mathfrak{g}$-invariant $\mathbb{Z}_2\times\mathbb{Z}_2$-graded subspace of $V$. It suffices to show that there exists a $\mathfrak{g}$-invariant $\mathbb{Z}_2\times\mathbb{Z}_2$-graded complement of $U$ in $V$.
Set
 \begin{equation*}
 	Z:=\{\gamma \in \mathrm{End}(V)\mid \gamma(V)\subseteq U\text{ and } \gamma_{\vert U} \text{ is scalar}\}
 \end{equation*}
and define the linear map
\begin{equation*}
	\sigma: \mathfrak{g}\rightarrow \mathrm{End}\big(\mathrm{End}(V)\big)
\end{equation*}
given by 
\begin{equation*}
	\sigma(X)\gamma\coloneqq \varphi(X)\gamma -(-1)^{\vert X\vert\vert \gamma\vert}\gamma\varphi(X)
\end{equation*}
for homogeneous $X\in\mathfrak{g}$ and $\gamma\in\mathrm{End}(V)$.
Then $\sigma$ is a $\mathbb{Z}_2\times\mathbb{Z}_2$-graded representation of $\mathfrak{g}$ on $\mathrm{End}(V)$ while it preserves the $\mathbb{Z}_2\times\mathbb{Z}_2$-graded subspace $Z$. Let us be more precise about the last statement. Let $S$ be a $\mathbb{Z}_2\times\mathbb{Z}_2$-graded subspace of $V$ such that $V=U\oplus S$. Then every $\gamma\in Z$ can be written as a linear combination
\begin{equation*}
	\gamma=\gamma_1+\sum\limits_{a\in\mathbb{Z}_2\times\mathbb{Z}_2} \gamma_2^a
\end{equation*} 
where $\gamma_1$ is a degree $(0,0)$ operator with $(\gamma_1)_{\vert U}=\gamma_{\vert U}$, i.e., scalar on $U$, and $(\gamma_1)_{\vert S}=0$, while every $\gamma_2^a$, $a\in \mathbb{Z}_2\times\mathbb{Z}_2$, is a homogeneous operator of degree $a$ with $\gamma_2^a(U)=\{0\}$ and $\gamma_2^a(S)\subseteq U$. Then, for $X\in\mathfrak{g}$ and $v\in V$, one can easily check that 
\begin{equation*}
\big(	\sigma(X)\gamma\big)(v)\in U.
\end{equation*}
Let us, now, show that $\big(\sigma(X)\gamma\big)_{\vert U}$ is scalar. For $u\in U$ and homogeneous $X\in\mathfrak{g}$, we have
\begin{align*}
	\sigma(X)\gamma(u)=&\sigma(X)(\gamma_1+\sum_a\gamma_2^a)(u)\\
	=&\sigma(X)\gamma_1(u)+\sum_a\sigma(X)\gamma_2^a(u)\\
	=&\varphi(X)\gamma_1(u)-\gamma_1\varphi(X)(u)\\
	&+\sum_a \varphi(X)\gamma_2^a(u)-(-1)^{\vert X\vert a}\gamma_2^a\varphi(X)(u).
\end{align*}
Since $u\in U$ and $U$ is $\varphi(\mathfrak{g})$-invariant, we have $\varphi(X)(u)\in U$ so that 
\begin{equation*}
	\varphi(X)\gamma_1(u)=\lambda \varphi(X)(u)
\end{equation*} and \begin{equation*}\gamma_1\varphi(X)(u)=\lambda \varphi(X)(u)
\end{equation*}
 for some $\lambda\in \mathbb{C}$. Moreover, by definition, $\gamma_2^a$'s are trivial on $U$ so that $\gamma_2^a(u)=0$ and $\gamma_2^a\varphi(X)(u)=0$ for every $a\in\mathbb{Z}_2\times\mathbb{Z}_2$. As a consequence, we have 
 \begin{equation*}
 	\sigma(X)\gamma(u)=0.
 \end{equation*}
In other words, $\big(\sigma(X)\gamma\big)_{\vert U}=0$ for every $\gamma\in Z$.
By the same argument, we see that the $\mathbb{Z}_2\times\mathbb{Z}_2$-graded subspace
\begin{equation*}
	Z'\coloneqq \{\gamma\in Z\vert \gamma_{\vert U}=0\}
\end{equation*}
is $\mathfrak{g}$-invariant, too, while $Z/Z'$
is $1$-dimensional. Then, by the above intermediate result, there must be a $1$-dimensional $\mathfrak{g}$-invariant $\mathbb{Z}_2\times\mathbb{Z}_2$-graded subspace $W=\mathbb{C}\gamma$ of $Z$ such that $Z=Z'\oplus W$. Of course, $\gamma$ must be homogeneous while its restriction to $U$ is a nonzero scalar operator. We deduce that $\gamma$ must be of degree $(0,0)$. Since $W$ is $1$-dimensional, $\mathfrak{g}$ acts on $W$ trivially (see Lemma \ref{trivialaction}) so that $\gamma$ commutes with $\varphi(X)$ for every $X\in\mathfrak{g}$. Therefore, $\ker \gamma$ is a $\mathfrak{g}$-invariant $\mathbb{Z}_2\times\mathbb{Z}_2$-graded subspace of $V$ with $V=U\oplus \ker \gamma$. This proves the $\mathbb{Z}_2\times\mathbb{Z}_2$-graded version of Theorem \ref{hwt}.
	
Let us now prove Theorem \ref{complred} for a general finite-dimensional representation $V$ of $\mathfrak{g}$, i.e., not necessarily $\mathbb{Z}_2\times\mathbb{Z}_2$-graded. Actually, we will show that any such representation admits a $\mathbb{Z}_2\times\mathbb{Z}_2$-graded $\mathfrak{g}$-representation structure. Moreover, with respect to this $\mathbb{Z}_2\times\mathbb{Z}_2$-grading, any $\mathfrak{g}$-invariant subspace of $V$ turns out to be $\mathbb{Z}_2\times\mathbb{Z}_2$-graded. As a consequence, we obtain Theorem \ref{complred}.

Let $V$ be a finite-dimensional representation of $\mathfrak{g}$.
In what follows, we will describe how one can equip $V$ with a $\mathbb{Z}_2\times\mathbb{Z}_2$-graded $\mathfrak{g}$-representation structure. We start by noting that, from the representation theory of $\mathfrak{sl}(2,\mathbb{C})$ and by using Lemma \ref{sltriple}, we deduce that $V$ is a weight module, i.e., it admits a weight subspace decomposition 
\begin{equation*}
	V=\bigoplus\limits_{\mu\in\mathfrak{t}^*}V_\mu.
\end{equation*}
If $\Lambda$ stands for the root lattice $\mathbb{Z}\Delta$ in $\mathfrak{t}^*$, then the weights of $V$ lie in a disjoint union of the form 
\begin{equation}\label{disjointunion}
	\bigsqcup\limits_{j=1}^k\lambda_j+\Lambda
\end{equation}
for suitable weights $\lambda_i$ of $V$. Since every component of \eqref{disjointunion} gives a direct summand of $V$, we may assume that all the weights of $V$ lie in a unique component, say $\lambda+\Lambda$ for some fixed weight $\lambda\in\mathfrak{t}^*$ of $V$. We may assume that $\lambda$ is maximal among all the weights of $V$. Any other weight $\mu$ of $V$ is written in a unique way as
\begin{equation*}
	\mu=\lambda+\sum\limits_{i}n_i\alpha_i
\end{equation*}
where $\Pi\coloneqq \{\alpha_1,\ldots,\alpha_l\}$ is the simple root set of $\Delta$ (related to $\Delta^+$) and $n_i$ are integers. 
Let us recall that according to Lemma \ref{oned}, for every $\beta\in\Delta$, $\dim\mathfrak{g}_\beta=1$ so that there exists a unique $b\in\mathbb{Z}_2\times\mathbb{Z}_2$ such that $\mathfrak{g}_\beta\subset \mathfrak{g}^b$. By abuse of notation, let us write $\vert \beta\vert \coloneqq b$. We note that, according to Lemma \ref{pairing}, $\vert \beta\vert=\vert-\beta\vert=-\vert \beta\vert$.
We set the degree $\vert V_\mu\vert$ of the weight subspace $V_\mu$ to be
\begin{equation*}
\vert V_\mu\vert\coloneqq\sum\limits_{i}n_i \vert \alpha_i\vert.
	\end{equation*}
Then $V$ becomes a $\mathbb{Z}_2\times\mathbb{Z}_2$-graded vector space.

Let us, now, show that, with respect to this grading, $V$ becomes a $\mathbb{Z}_2\times\mathbb{Z}_2$-graded representation of $\mathfrak{g}$. In other words, we will show that for every $\beta\in\Delta$, whenever $\mathfrak{g}_\beta V_\mu$ is nontrivial, we have 
\begin{equation*}
	\vert \mathfrak{g}_{\beta} V_\mu\vert=\vert V_\mu\vert+\vert \beta\vert.
\end{equation*}
Let us assume that $\beta$ is positive; in a similar way one can show the statement for negative $\beta$.
Write $\beta$ as a nonnegative integer sum of simple roots
\begin{equation*}
	\beta=\sum\limits_{i}m_i\alpha.
\end{equation*}
Then 
\begin{equation*}
	\mathfrak{g}_{\beta} V_\mu\subseteq V_{\mu+\beta}
\end{equation*}
so that
\begin{equation}\label{grinclusion}
	\vert \mathfrak{g}_{\beta}V_\mu\vert =\sum_in_i\vert\alpha_i\vert + \sum_im_i\vert \alpha_i\vert=\vert V_\mu\vert+\sum_im_i\vert \alpha_i\vert
\end{equation}
We will have finished if we prove the following lemma.

\begin{lemma} If $\beta\in\Delta^+$ with 
	\begin{equation*}
		\beta=\sum_im_i\alpha_i,
	\end{equation*}
then 
\begin{equation*}
	\vert \beta\vert =\sum_im_i\vert \alpha_i\vert.
\end{equation*}
\end{lemma}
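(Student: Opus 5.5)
Proceed by induction on the height $\mathrm{ht}(\beta)=\sum_i m_i$ of the positive root $\beta$. If $\mathrm{ht}(\beta)=1$, then $\beta=\alpha_i$ is simple and there is nothing to prove. For the induction step, use the standard fact from the theory of abstract root systems (Corollary \ref{abstractrootsystem}, \cite[Section II.6]{knappgreen}): every positive root $\beta$ that is not simple can be written as $\beta=\beta'+\alpha_j$, where $\beta'\in\Delta^+$ and $\alpha_j\in\Pi$. Concretely, since $\langle\beta,\beta\rangle>0$ and $\beta=\sum_i m_i\alpha_i$ with $m_i\geq 0$, there is some $j$ with $m_j>0$ and $\langle\beta,\alpha_j\rangle>0$. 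Then $s_{\alpha_j}(\beta)$ shows that $\beta-\alpha_j\in\Delta\sqcup\{0\}$. Moreover, $\beta-\alpha_j\neq 0$ because $\beta$ is not simple, and it is positive because $\beta$ has some other positive coefficient. By the induction hypothesis, $|\beta'|=\sum_i m_i|\alpha_i|-|\alpha_j|$.

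The key step is to show the additivity $|\beta'+\alpha_j|=|\beta'|+|\alpha_j|$, which holds as soon as $[\mathfrak{g}_{\beta'},\mathfrak{g}_{\alpha_j}]\neq\{0\}$. Indeed, $[\mathfrak{g}^{|\beta'|}_{\beta'},\mathfrak{g}^{|\alpha_j|}_{\alpha_j}]\subseteq \mathfrak{g}^{|\beta'|+|\alpha_j|}_{\beta}$, and by Lemma \ref{oned} the space $\mathfrak{g}_\beta$ is one-dimensional and sits in the single degree $|\beta|$. A nonzero bracket therefore forces $|\beta|=|\beta'|+|\alpha_j|$, and the induction closes (note that the signs in $\mathbb{Z}_2\times\mathbb{Z}_2$ are irrelevant).

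The main obstacle is this nonvanishing of the bracket. To handle it, consider the $\alpha_j$-string through $\beta'$, that is, the representation $M_2=\bigoplus_n\mathfrak{g}_{\beta'+n\alpha_j}$ of $\mathfrak{s}_{\alpha_j}^{a}\cong\mathfrak{sl}(2,\mathbb{C})$ with $a=|\alpha_j|$ (Lemma \ref{sltriple}). By Lemma \ref{oned}, every weight space of $M_2$ is one-dimensional, and by Lemma \ref{scentr} together with $\beta'\neq\pm\alpha_j$ (the only multiples of $\alpha_j$ that are roots are $\pm\alpha_j$), the string contains no $\mathfrak{t}$-part. The $h_{\alpha_j}$-eigenvalues on $M_2$ are $\beta'(h_{\alpha_j})+2n$ for $n=-p,\dots,q$, each with multiplicity one, so $M_2$ is an irreducible $\mathfrak{sl}(2,\mathbb{C})$-module. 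Since $\beta'+\alpha_j=\beta$ is a root, the vector $E_{\beta'}$ is not a highest weight vector in $M_2$, and therefore $x_{\alpha_j}^a$ maps $\mathfrak{g}_{\beta'}$ nontrivially onto $\mathfrak{g}_\beta$. Thus $[E_{\alpha_j}^a,E_{\beta'}]\neq 0$, as required.

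For negative roots the statement follows from $|-\beta|=|\beta|$ (Lemma \ref{pairing}) and the fact that negation preserves the right-hand side modulo $2$. This is what is needed in \eqref{grinclusion} to conclude that $V$ is a $\mathbb{Z}_2\times\mathbb{Z}_2$-graded representation.
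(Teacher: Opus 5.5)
Your proof is correct and follows essentially the same route as the paper: induction on height, a simple root $\alpha_j$ with $\langle\beta,\alpha_j\rangle>0$, an $\mathfrak{sl}_2$-string argument for $\mathfrak{s}_{\alpha_j}^{\vert\alpha_j\vert}$ giving a nonzero bracket between $\mathfrak{g}_{\beta-\alpha_j}$ (resp.\ $\mathfrak{g}_\beta$) and $\mathfrak{g}_{\pm\alpha_j}$, and the one-dimensionality of root spaces from Lemma \ref{oned} to force additivity of the degrees. Your version is in fact more careful than the paper's: choosing $j$ via $\langle\beta,\beta\rangle=\sum_i m_i\langle\beta,\alpha_i\rangle$ guarantees $m_j>0$, and you justify the nonvanishing through irreducibility of the string module. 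One phrase is loose: $s_{\alpha_j}(\beta)=\beta-k\alpha_j$ with $k\geq 1$ does not by itself show $\beta-\alpha_j\in\Delta\sqcup\{0\}$. That step needs the unbroken-string property, which your own irreducibility argument supplies when applied to the $\alpha_j$-string through $\beta$.
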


\begin{proof} Let $\beta\in\Delta^+$ be as in the statement. We will apply an induction on the length 
	\begin{equation*}
		l(\beta)\coloneqq\sum_i m_i 
	\end{equation*}
of $\beta$. If $l(\beta)=1$, i.e., $\beta$ is a simple root, then the statement holds trivially. 
 Let us assume that the statement holds for every root $\gamma$ with $l(\gamma)<N$ and $l(\beta)=N$. 
 Recall that, according to Corollary \ref{abstractrootsystem}, $\Delta$ is an abstract root system so that, if 
 \begin{equation*}
 	\rho\coloneqq\dfrac{1}{2} \sum\limits_{\alpha\in\Delta^+}\alpha, 
 \end{equation*}
 then $\langle\alpha_i,\rho\rangle >0$ for every $\alpha_i\in\Pi$ (e.g., \cite[Proposition 2.69]{knappgreen}). 
 Therefore $\langle\beta,\rho\rangle>0$ and there must be some simple root $\alpha_j\in\Pi$ such that $\langle\beta,\alpha_j\rangle >0$. 
 The representation theory of the $\mathfrak{sl}_2$-copy 
 \begin{equation*}
 \mathfrak{s}_{\alpha_j}^{\vert \alpha_j\vert}\coloneqq\mathbb{C}\{h_{\alpha_j},x_{\alpha_j}^{\vert \alpha_j\vert},y_{\alpha_j}^{\vert \alpha_j\vert}\}
 \end{equation*}
 ensures that 
 \begin{equation*}
 	[\mathfrak{g}_{\beta},\mathfrak{g}_{-\alpha_j}]\neq0
 \end{equation*}
so that, from Lemma \ref{oned}, 
\begin{equation*}
	[\mathfrak{g}_\beta,\mathfrak{g}_{-\alpha_j}]=\mathfrak{g}_{\beta-\alpha_j},
\end{equation*}
and 
\begin{equation*}
	[\mathfrak{g}_{\alpha_j},\mathfrak{g}_{\beta-\alpha_j}]=\mathfrak{g}_\beta.
\end{equation*}
As a consequence, 
\begin{equation*}
	\vert\beta\vert=\vert \beta-\alpha_j\vert +\vert \alpha_j\vert.
\end{equation*}
From the induction hypothesis, 
\begin{equation*}
	\vert \beta-\alpha_j\vert=\sum_{i\neq j}m_i\vert\alpha_j\vert +(m_j-1)\vert\alpha_j\vert
\end{equation*}
so that 
\begin{equation*}
	\vert\beta\vert =\sum\limits_im_i\vert\alpha_j\vert.
\end{equation*}

\end{proof}

As a consequence of the above Lemma, relation \eqref{grinclusion} implies that 
\begin{equation*}
	\vert \mathfrak{g}_{\pm\beta}V_\mu\vert =\vert \beta\vert +\vert V_\mu\vert
\end{equation*}
and $V$ is a $\mathbb{Z}_2\times \mathbb{Z}_2$-graded $\mathfrak{g}$-representation.

Let $U$ be a $\mathfrak{g}$-invariant subspace of $V$. Then $U$ is a $\mathbb{Z}_2\times\mathbb{Z}_2$-graded subspace of $V$. Indeed, $U$ is a weight module 
\begin{equation*}
	U=\bigoplus\limits_{\mu\in\mathfrak{t}^*}U_\mu
\end{equation*}
and $U_\mu\subseteq V_\mu$. Since $V_\mu$ consists of homogeneous elements, this is also the case for $U_\mu$. We deduce that $U$ is a $\mathbb{Z}_2\times\mathbb{Z}_2$-graded $\mathfrak{g}$-invariant subspace of $V$. 

As a consequence of the above discussion, we get the following proposition.

\begin{proposition}
	There is an equivalence of categories between the category of finite-dimensional representations of $\mathfrak{g}$ and the category of finite-dimensional $\mathbb{Z}_2\times\mathbb{Z}_2$-graded representations of $\mathfrak{g}$.
\end{proposition}

\section{Two examples and a question}
In this section, we illustrate the preceding results with two examples and conclude by raising a question.

Let $\mathfrak{g}$ be the $\mathbb{Z}_2\times\mathbb{Z}_2$-graded Lie algebra $\mathfrak{so}(p,q,r,s)_\mathbb{C}$ defined in \cite{stoil25a} by
\begin{equation}\label{so}
	\begin{array}{c c}
		{\begin{array} {@{} c c  cc @{}} \ \ p\ \ \ & \ \ q\ \ \ & \ \ r\ \ \ & \ \ s \ \ \end{array}} & {} \\  
		\left(\begin{array}{cccc} 
			a_{(0,0)} & a_{(0,1)} & a_{(1,0)} & a_{(1,1)} \\[1mm] 
			-a_{(0,1)}^t & b_{(0,0)} & b_{(1,1)} & b_{(1,0)} \\[1mm] 
			-a_{(1,0)}^t & b_{(1,1)}^t & c_{(0,0)} & c_{(0,1)} \\[1mm] 
			-a_{(1,1)}^t & b_{(1,0)}^t & c_{(0,1)}^t & d_{(0,0)} 
		\end{array}\right) 
		& \hspace{-2mm}	{\begin{array}{l}
				p \\[1mm]  q \\[1mm]	r \\[1mm] s
		\end{array} }\\ 
	\end{array} 
\end{equation}
where $a_{(0,0)}$, $b_{(0,0)}$, $c_{(0,0)}$ and $d_{(0,0)}$ are antisymmetric matrices, and the indices on each block indicate the corresponding grading. Observe that the form \eqref{so} is similar to the standard form of matrices in the Lie algebra $\mathfrak{so}(p+q+r+s,\mathbb{C})$ apart from a sign difference in the blocks $b_{(1,1)}^t$, $b_{(1,0)}^t$ and $c_{(0,1)}^t$.

\subsection{The case of $\mathfrak{so}(4,2,2,2)_\mathbb{C}$} Assume that $\mathfrak{g}$ is $\mathfrak{so}(4,2,2,2)$.
The Cartan subalgebra $\mathfrak{t}$ of $\mathfrak{g}$ is generated by the matrices
\begin{equation*}
H_1\coloneqq	\begin{pmatrix}
		\begin{pmatrix}\begin{matrix}
			0&i\\
			-i&0
		\end{matrix}&\\
	&	0_{2\times2}
\end{pmatrix}&&&\\
&	0_{2\times2}&&\\
&&0_{2\times2}&\\
&&&0_{2\times2}
	\end{pmatrix}
\end{equation*}
\begin{equation*}
	H_2\coloneqq
	\begin{pmatrix}
		\begin{pmatrix}0_{2\times2}&\\
			&	\begin{matrix}
				0&i\\
				-i&0
			\end{matrix}
		\end{pmatrix}&&&\\
		&	0_{2\times2}&&\\
		&&0_{2\times2}&\\
		&&&0_{2\times2}
	\end{pmatrix}
\end{equation*}
\begin{equation*}
H_3\coloneqq\begin{pmatrix}
	\begin{pmatrix}0_{2\times2}&\\
		&	0_{2\times2}
	\end{pmatrix}&&&\\
	&	\begin{matrix}
		0&i\\
		-i&0
	\end{matrix}&&\\
	&&0_{2\times2}&\\
	&&&0_{2\times2}
\end{pmatrix}
\end{equation*}
\begin{equation*}
	H_4\coloneqq\begin{pmatrix}
		\begin{pmatrix}0_{2\times2}&\\
			&	0_{2\times2}
		\end{pmatrix}&&&\\
		&	0_{2\times2}&&\\
		&&\begin{matrix}
			0&i\\
			-i&0
		\end{matrix}&\\
		&&&0_{2\times2}
	\end{pmatrix}
\end{equation*}
\begin{equation*}
	H_5\coloneqq\begin{pmatrix}
		\begin{pmatrix}0_{2\times2}&\\
			&	0_{2\times2}
		\end{pmatrix}&&&\\
		&	0_{2\times2}&&\\
		&&0_{2\times2}&\\
		&&&\begin{matrix}
			0&i\\
			-i&0
		\end{matrix}
	\end{pmatrix}.
\end{equation*}
In other words, $\mathfrak{h}$ coincides with the Cartan subalgebra of the corresponding Lie algebra $\mathfrak{so}(10,\mathbb{C})$. In this case, $\mathfrak{t}$ is self-centralizing.

If we define $\varepsilon_i\in\mathfrak{t}^*$, $i\in{1,\ldots,5}$, by
\begin{equation*}
	\varepsilon_i(H_j)=\delta_{ij}
\end{equation*}
then the root system of $\mathfrak{g}$ with respect to $\mathfrak{h}$ is 
\begin{equation*}
	\Delta=\{\pm\varepsilon_i\pm\varepsilon_j\vert 1\leq i\neq j \leq5\}. 
\end{equation*}

For $1\leq i<j\leq 5$, set
\begin{equation*}
\begin{matrix}
	X_{\varepsilon_i-\varepsilon_j}\coloneqq \begin{pmatrix}1&i\\
		-i&1
		\end{pmatrix},&X_{\varepsilon_i+\varepsilon_j}\coloneqq \begin{pmatrix}1&-i\\
		-i&-1
	\end{pmatrix},\\
	X_{-\varepsilon_i+\varepsilon_j}\coloneqq \begin{pmatrix}1&-i\\
		i&1
	\end{pmatrix},&X_{-\varepsilon_i-\varepsilon_j}\coloneqq \begin{pmatrix}1&i\\
	i&-1
\end{pmatrix}
	\end{matrix}
\end{equation*}
Then, for $1\leq i<j\leq 5$, the root vectors are
\begin{itemize}
	\item If $i=1$ or $i=2$,
\begin{equation*}
	E_{\pm\varepsilon_i\pm\varepsilon_j}\coloneqq \begin{blockarray}{ccc}
		i & j  \\
		\begin{block}{(cc)c}
			0 & X_{\pm\varepsilon_i\pm\varepsilon_j}  & i \\
			-X_{\pm\varepsilon_i\pm\varepsilon_j}^t & 0 & j \\
		\end{block}
	\end{blockarray}
\end{equation*}
\item If $i=3$ or $i=4$,
\begin{equation*}
	E_{\pm\varepsilon_i\pm\varepsilon_j}\coloneqq \begin{blockarray}{ccc}
		i & j  \\
		\begin{block}{(cc)c}
			0 & X_{\pm\varepsilon_i\pm\varepsilon_j}  & i \\
			X_{\pm\varepsilon_i\pm\varepsilon_j}^t & 0 & j \\
		\end{block}
	\end{blockarray}
\end{equation*}
\end{itemize}
For every $\alpha\in\Delta$, the corresponding root subspace $\mathfrak{g}_\alpha$ is $1$-dimensional. The gradings of each root vector is 
\begin{itemize}
	\item $(0,0)$ for $E_{\pm\varepsilon_1\pm\varepsilon_2}$,
	\item $(0,1)$ for $E_{\pm\varepsilon_1\pm\varepsilon_3}$, $E_{\pm\varepsilon_2\pm\varepsilon_3}$, $E_{\pm\varepsilon_4\pm\varepsilon_5}$,
	\item  $(1,0)$ for $E_{\pm\varepsilon_1\pm\varepsilon_4}$, $E_{\pm\varepsilon_2\pm\varepsilon_4}$, $E_{\pm\varepsilon_3\pm\varepsilon_5}$,
	\item $(1,1)$ for $E_{\pm\varepsilon_1\pm\varepsilon_5}$, $E_{\pm\varepsilon_2\pm\varepsilon_5}$, $E_{\pm\varepsilon_3\pm\varepsilon_4}$.
\end{itemize}

The similarities with the Lie algebra $\mathfrak{so}(10,\mathbb{C})$ are striking.
 This is expected for the following reason: There is a canonical vector space isomorphism 
 \begin{equation*}
 	\varphi:\mathfrak{g}\rightarrow\mathfrak{so}(10,\mathbb{C})
 	\end{equation*}
 	  which is completely determined by being the identity map on the upper blocks 
 	  \begin{equation*}
 	  	\begin{array}{c c}
 	  		{\begin{array} {@{} c c  cc @{}} \ \ 4\ \ \ & \ \ 2\ \ \ & \ \ 2\ \ \ & \ \ 2 \ \ \end{array}} & {} \\  
 	  		\left(\begin{array}{cccc} 
 	  			a_{(0,0)} & a_{(0,1)} & a_{(1,0)} & a_{(1,1)} \\[1mm] 
 	  			& b_{(0,0)} & b_{(1,1)} & b_{(1,0)} \\[1mm] 
 	  			&  & c_{(0,0)} & c_{(0,1)} \\[1mm] 
 	  			 &  & & d_{(0,0)} 
 	  		\end{array}\right) 
 	  		& \hspace{-2mm}	{\begin{array}{l}
 	  				4 \\[1mm]  2 \\[1mm]	2 \\[1mm] 2
 	  		\end{array} }\\ 
 	  	\end{array} 
 	  \end{equation*}
 	  Moreover, the restriction of $\varphi$ to 
 	 \begin{equation*}
 	 \mathfrak{g}^{(0,0)}\coloneqq	\begin{array}{c c}
 	 		{\begin{array} {@{} c c  cc @{}} \ \ 4\ \ \ & \ \ 2\ \ \ & \ \ 2\ \ \ & \ \ 2 \ \ \end{array}} & {} \\  
 	 		\left(\begin{array}{cccc} 
 	 			a_{(0,0)} & &  &  \\[1mm] 
 	 			& b_{(0,0)} &  &  \\[1mm] 
 	 			&  & c_{(0,0)} &  \\[1mm] 
 	 			&  & & d_{(0,0)} 
 	 		\end{array}\right) 
 	 		& \hspace{-2mm}	{\begin{array}{l}
 	 				4 \\[1mm]  2 \\[1mm]	2 \\[1mm] 2
 	 		\end{array} }\\ 
 	 	\end{array} 
 	 \end{equation*}
 	  is a Lie algebra isomorphism onto its image $\varphi(\mathfrak{g}^{(0,0)})$. Then $\mathfrak{g}^{(0,0)}$ and $\varphi(\mathfrak{g}^{(0,0)})$ act on $\mathfrak{g}$ and $\mathfrak{so}(10,\mathbb{C})$ respectively and the map $\varphi$ is equivariant with respect to these actions while these actions in some sense determine the root systems of $\mathfrak{g}$ and $\mathfrak{so}(10,\mathbb{C})$.
 	  
 	  \begin{remark}
 	  	This example is typical for the case where $p$, $q$ and $r$ are even. 
 	  \end{remark}
 	  
 	  \subsection{The case of $\mathfrak{so}(4,2,1,1)_{\mathbb{C}}$}
 	  Let us assume that $\mathfrak{g}$ is $\mathfrak{so}(4,2,1,1)_\mathbb{C}$. The Cartan subalgebra $\mathfrak{t}$ of $\mathfrak{g}$ is generated by the matrices 
 	  \begin{equation*}
 H_1\coloneqq	\begin{pmatrix}
 	\begin{pmatrix}\begin{matrix}
 			0&i\\
 			-i&0
 		\end{matrix}&\\
 		&	0_{2\times2}
 	\end{pmatrix}&&&\\
 	&	0_{2\times2}&&\\
 	&&0&\\
 	&&&0
 \end{pmatrix}
\end{equation*}
\begin{equation*}
H_2\coloneqq
\begin{pmatrix}
	\begin{pmatrix}0_{2\times2}&\\
		&	\begin{matrix}
			0&i\\
			-i&0
		\end{matrix}
	\end{pmatrix}&&&\\
	&	0_{2\times2}&&\\
	&&0&\\
	&&&0
\end{pmatrix}
\end{equation*}
\begin{equation*}
H_3\coloneqq\begin{pmatrix}
	\begin{pmatrix}0_{2\times2}&\\
		&	0_{2\times2}
	\end{pmatrix}&&&\\
	&	\begin{matrix}
		0&i\\
		-i&0
	\end{matrix}&&\\
	&&0&\\
	&&&0
\end{pmatrix}
\end{equation*}
Keeping the same notation for $\varepsilon_i$, $1\leq i\leq3$, as above, the root system with respect to $\mathfrak{t}$ is 
\begin{equation*}
	\Delta=\{\pm\varepsilon_i\pm\varepsilon_j\vert 1\leq i<j\leq 3\}\sqcup\{\pm\varepsilon_i\vert1\leq i\leq 3\}.
\end{equation*}
The root vectors (with the corresponding gradings) are 
	\begin{equation*}
	E_{\pm\varepsilon_1\pm\varepsilon_2}^{(0,0)}\coloneqq \begin{pmatrix}
		\begin{pmatrix}
			0_{2\times2}&X_{\pm\varepsilon_1\pm\varepsilon_2}\\
			-X_{\pm\varepsilon_1\pm\varepsilon_2}^t&0_{2\times2}
		\end{pmatrix}&
	\begin{matrix}0_{2\times2}\\
		0_{2\times2}
		\end{matrix}&&\\
		\begin{matrix}0_{2\times2}&0_{2\times2}
			\end{matrix}&0_{2\times2}	&&\\
		&&0&\\
		&&&0
	\end{pmatrix}
\end{equation*}
\begin{equation*}
	E_{\pm\varepsilon_1\pm\varepsilon_3}^{(0,1)}\coloneqq \begin{pmatrix}
		\begin{pmatrix}
			0_{2\times2}&0_{2\times2}\\
			0_{2\times2}&0_{2\times2}
		\end{pmatrix}&
	\begin{matrix}
		X_{\pm\varepsilon_1\pm\varepsilon_3}\\
		0_{2\times2}
		\end{matrix}
		&&\\
		\begin{matrix}
		-X_{\pm\varepsilon_1\pm\varepsilon_3}^t &0_{2\times2}
		\end{matrix}&0_{2\times2}	&&\\
		&&0&\\
		&&&0
	\end{pmatrix}
\end{equation*}
\begin{equation*}
	E_{\pm\varepsilon_2\pm\varepsilon_3}^{(0,1)}\coloneqq \begin{pmatrix}
		\begin{pmatrix}
			0_{2\times2}&0_{2\times2}\\
			0_{2\times2}&0_{2\times2}
		\end{pmatrix}&
		\begin{matrix}
			0_{2\times2}\\
			X_{\pm\varepsilon_2\pm\varepsilon_3}
		\end{matrix}
		&&\\
		\begin{matrix}
			0&-X_{\pm\varepsilon_2\pm\varepsilon_3}^t 
		\end{matrix}&0_{2\times2}	&&\\
		&&0&\\
		&&&0
	\end{pmatrix}
\end{equation*}
\begin{equation*}
X_{\pm\varepsilon_1}^{(1,0)}\coloneqq\begin{pmatrix}
0_{6\times6}&\begin{matrix}
1&0\\
\mp i&0\\
0&0\\
0&0\\
0&0\\
0&0
\end{matrix}\\
\begin{matrix}
	-1&\pm i&0&0&0&0\\
	0&0&0&0&0&0
\end{matrix}&0_{2\times2}
\end{pmatrix}
\end{equation*}
\begin{equation*}
	E_{\pm\varepsilon_1}^{(1,1)}\coloneqq\begin{pmatrix}
		0_{6\times6}&\begin{matrix}
			0&1\\
			0&\mp i\\
			0&0\\
			0&0\\
			0&0\\
			0&0
		\end{matrix}\\
		\begin{matrix}
			0&0&0&0&0&0\\
			-1&\pm i&0&0&0&0
		\end{matrix}&0_{2\times2}
	\end{pmatrix}
\end{equation*}
\begin{equation*}
	E_{\pm\varepsilon_2}^{(1,0)}\coloneqq\begin{pmatrix}
		0_{6\times6}&\begin{matrix}
			0&0\\
			0&0\\
			1&0\\
			\mp i&0\\
			0&0\\
			0&0
		\end{matrix}\\
		\begin{matrix}
			0&0&-1&\pm i&0&0\\
			0&0&0&0&0&0
		\end{matrix}&0_{2\times2}
	\end{pmatrix}
\end{equation*}
\begin{equation*}
	E_{\pm\varepsilon_2}^{(1,1)}\coloneqq\begin{pmatrix}
		0_{6\times6}&\begin{matrix}
			0&0\\
			0&0\\
			0&1\\
			0&\mp i\\
			0&0\\
			0&0
		\end{matrix}\\
		\begin{matrix}
			0&0&0&0&0&0\\
			0&0&-1&\pm i&0&0
		\end{matrix}&0_{2\times2}
	\end{pmatrix}
\end{equation*}
\begin{equation*}
	E_{\pm\varepsilon_3}^{(1,0)}\coloneqq\begin{pmatrix}
		0_{6\times6}&\begin{matrix}
			0&0\\
			0&0\\
			0&0\\
			0&0\\
			1&0\\
			\mp i&0
		\end{matrix}\\
		\begin{matrix}
			0&0&0&0&1&\mp i\\
			0&0&0&0&0&0
		\end{matrix}&0_{2\times2}
	\end{pmatrix}
\end{equation*}
\begin{equation*}
	E_{\pm\varepsilon_3}^{(1,1)}\coloneqq\begin{pmatrix}
		0_{6\times6}&\begin{matrix}
			0&0\\
			0&0\\
			0&0\\
			0&0\\
			0&1\\
			0&\mp i
		\end{matrix}\\
		\begin{matrix}
			0&0&0&0&0&0\\
			0&0&0&0&1&\mp i
		\end{matrix}&0_{2\times2}
	\end{pmatrix}
\end{equation*}
Finally, we have the vector 
\begin{equation*}
	E_{0}^{(0,1)}\coloneqq \begin{pmatrix}
		0_{6\times 6}&0_{6\times 2}\\
		0_{2\times 6}&
		\begin{matrix}
			0&1\\
			1&0
			\end{matrix}
	\end{pmatrix}
\end{equation*}
which is annihilated by $\mathfrak{t}$. 

From the above discussion we see that for a fixed grading degree $a$ and a fixed root $\alpha$ the corresponding root subspace $\mathfrak{g}^a_\alpha$ is at most $1$-dimensional. This is what Lemma \ref{onedim} indicates. On the other hand, we see that the root subspaces $\mathfrak{g}_{\pm\varepsilon_1}$, $\mathfrak{g}_{\pm\varepsilon_2}$ and $\mathfrak{g}_{\pm\varepsilon_3}$ are $2$-dimensional, namely
\begin{equation*}
	\mathfrak{g}_{\pm\varepsilon_i}=\mathfrak{g}_{\pm\varepsilon_i}^{(1,0)}\oplus\mathfrak{g}_{\pm\varepsilon_i}^{(1,1)}
\end{equation*}
where 
\begin{align*}
	\mathfrak{g}_{\pm\varepsilon_i}^{(1,0)}&=\mathbb{C}X_{\pm\varepsilon_i}^{(1,0)},\\
		\mathfrak{g}_{\pm\varepsilon_i}^{(1,1)}&=\mathbb{C}X_{\pm\varepsilon_i}^{(1,1)}.
\end{align*}
This is due to the fact that $\mathfrak{t}$ is not self-centralizing in $\mathfrak{g}$ and the condition of Lemma \ref{oned} is not satisfied.

\subsection{A question}
If $\mathfrak{g}$ is a basic simple $\mathbb{Z}_2 \times \mathbb{Z}_2$-graded Lie algebra, we have seen that one can associate to $\mathfrak{g}$ an abstract root system in the sense of \cite[II.5]{knappgreen} (see Corollary~\ref{abstractrootsystem}). Consequently, the full machinery of abstract root systems applies (Weyl groups, positive systems, simple roots, etc.). This abstract root system uniquely determines a Dynkin diagram (see \cite[II.5]{knappgreen}). In other words, to every basic simple $\mathbb{Z}_2 \times \mathbb{Z}_2$-graded Lie algebra one can attach a Dynkin diagram.

However, non-isomorphic basic simple $\mathbb{Z}_2 \times \mathbb{Z}_2$-graded Lie algebras may correspond to the same Dynkin diagram. For example, this occurs for $\mathfrak{so}(4,2,2,2){\mathbb{C}}$ and $\mathfrak{so}(4,4,2,0){\mathbb{C}}$, which both correspond to the Dynkin diagram of type $D_5$. Therefore, in order to classify basic simple $\mathbb{Z}_2 \times \mathbb{Z}_2$-graded Lie algebras, one must introduce "enhanced" Dynkin diagrams that incorporate the grading data.

We plan to address the classification problem in future work.

\end{document}